\numberwithin{equation}{section}	
\theoremstyle{plain}
\newtheorem{thm}{Theorem}[section]      
\newtheorem*{thm*}{Theorem}
\newtheorem{lem}[thm]{Lemma}
\newtheorem{prop}[thm]{Proposition}
\newtheorem{cor}[thm]{Corollary}
\theoremstyle{definition}
\newtheorem{definition}[thm]{Definition}
\newtheorem{example}[thm]{Example}
\theoremstyle{remark}
\newtheorem{remark}[thm]{Remark}
\theoremstyle{remark}
\newtheorem{remarks}[thm]{Remarks}
\def\geqsl{\geqslant}
\def\leqsl{\leqslant}
\newcommand{\tp}[1]{\ensuremath{\operatorname{}^{t}\hspace{-3truept}{#1}}}
\newcommand{\Mat}[1]{\ensuremath{\operatorname{Mat}_{#1}}}
\newcommand{\Lie}[1]{\ensuremath{\operatorname{Lie}{#1}}}
\newcommand{\Ad}{\ensuremath{\operatorname{Ad}}}  
\newcommand{\ad}{\ensuremath{\operatorname{ad}}}  
\renewcommand{\Omega}{\ensuremath{\varOmega}}
\renewcommand{\Xi}{\ensuremath{\varXi}}
\renewcommand{\Theta}{\ensuremath{\varTheta}}
\newcommand{\R}{\ensuremath{\mathbb R}}
\newcommand{\C}{\ensuremath{\mathbb C}}
\newcommand{\gl}{\ensuremath{\mathfrak {gl}}}
\newcommand{\g}{\ensuremath{\mathfrak{g}}}   
\newcommand{\q}{\ensuremath{\mathfrak{q}}}   
\newcommand{\h}{\ensuremath{\mathfrak{h}}}   
\renewcommand{\u}{\ensuremath{\mathfrak{u}}}
\newcommand{\GL}[1]{\ensuremath{\mathrm{GL}_{#1}}}
\newcommand{\SL}[1]{\ensuremath{\mathrm{SL}_{#1}}}
\newcommand{\U}{\ensuremath{\mathrm{U}}}   
\newcommand{\SU}{\ensuremath{\mathrm{SU}}}   
\newcommand{\Gc}{\ensuremath{G_{\C}}}
\newcommand{\Kc}{\ensuremath{K_{\C}}}
\newcommand{\dd}{\ensuremath{\operatorname{d}\!}}
\def\<{\langle}
\def\>{\rangle}
\def\c2vec#1#2{ %
   \left[ \begin{smallmatrix} %
           #1 \\ #2  \end{smallmatrix} %
   \right]}
\newcommand{\twdctgbdl}[1]{\ensuremath{T^*(#1)_{\lambda}}} 
\renewcommand{\Gc}{\ensuremath{G}}
\renewcommand{\Kc}{\ensuremath{K}}
\newcommand{\Gr}{\ensuremath{G_{\R}}}
\newcommand{\Kr}{\ensuremath{K_{\R}}}
\newcommand{\Qmax}{\ensuremath{Q}}  
\newcommand{\qmax}{\ensuremath{\mathfrak q}}  
\newcommand{\flag}{\ensuremath{\Gc/Q}}  
\newcommand{\lpsi}{\ensuremath{\psi_{\lambda;e}}} 
\newcommand{\lPsi}{\ensuremath{\Psi_{\lambda;e}}} 
\newcommand{\inv}[1]{\ensuremath{{#1\!\!\!}^{-1}}}
\title%
{A twisted moment map and its equivariance} 
\author{Takashi Hashimoto}
\thanks{Partly supported by the Grant-in-Aid for Scientific Research (C), Japan Society for the Promotion of Science.}
\address{
  University Education Center, 
  Tottori University, 
  4-101, Koyama-Minami, Tottori, 680-8550, Japan
    }
\email{thashi@uec.tottori-u.ac.jp}
\date{\today}
\keywords{%
    twisted moment map,
    $G$-equivariance,
    holomorphic twisted cotangent bundle,
    complex coadjoint orbit,
    symplectic isomorphism%
   }
\subjclass[2010]{53D20, 22F30}
\begin{document}

\begin{abstract}
Let $G$ be a linear connected complex reductive Lie group. 
The purpose of this paper is to construct a $G$-equivariant symplectomorphism 
in terms of local coordinates
from a holomorphic twisted cotangent bundle of the generalized flag variety of $G$
onto the semisimple coadjoint orbit of $G$.
As an application,
one can obtain an explicit embedding of a noncompact real coadjoint orbit into the twisted cotangent bundle.
\end{abstract}

\maketitle

\section{Introduction}
\label{intro}

The main purpose of this paper is to construct an equivariant symplectomorphism 
concretely in terms of local coordinates
from a holomorphic twisted cotangent bundle of the complex generalized flag variety
onto the complex coadjoint orbit of a semisimple element.
As an application,
one can obtain an explicit embedding of a noncompact real coadjoint orbit into the twisted cotangent bundle.

More precisely,
let $\Gc$ denote a linear connected complex reductive Lie group with Lie algebra $\g$.
We fix a Cartan subalgebra $\h$ of $\g$, 
and consider a nonzero element $\lambda$ of $\h^{*}$, the dual space of $\h$.
Under the assumption that 
the isotropy subalgebra of $\lambda$ in $\g$ is distinct from $\g$,
take a parabolic subgroup $Q$ of $\Gc$
whose Levi factor is the isotropy subgroup of $\lambda$ in $\Gc$,
and let $\{ U_{\sigma} \}_{\sigma \in W/W_\lambda}$ 
be the open covering of the flag variety $\Gc/Q$ indexed by $W/W_{\lambda}$
(see \eqref{e:open_covering_of_the_flag_variety} below for details).
Then, based on the key observation 
that cotangent vectors can be written in terms of one-form taking values in a subalgebra of $\g$
(Lemma \ref{l:key_relation} below), 
we construct a holomorphic isomorphism $\mu_{\lambda;\sigma}$
from the cotangent bundle $T^* U_\sigma$
into the complex coadjoint orbit $\Omega_{\lambda} := \Gc \cdot \lambda$ for each $\sigma$.
Note that $U_\sigma$ is homeomorphic to $\C^n$ and that $T^{*} U_\sigma$ is trivial, 
i.e., $T^{*} U_\sigma \simeq U_\sigma \times \C^n$ with $n=\dim (\flag)$ for each $\sigma$.
We shall see that the isomorphisms $\{ \mu_{\lambda;\sigma} \}_{\sigma \in W/W_\lambda}$ 
are closely related to the triangular decomposition of $\Gc$ (or rather, of $\g$).

A prototype of the isomorphism $\mu_{\lambda;\sigma}$ was obtained 
in the process of proving the formula for the generating function 
of the principal symbols of the invariant differential operators
that play an essential role in the Capelli identities 
in the case of Hermitian symmetric spaces
(see \cite{HU91} and \cite{KinvDO}).
Namely, let $(\Gr,\Kr)$ be a classical Hermitian symmetric pair 
of noncompact type
such that $\Gr$ is a real form of $\Gc$,
and assume that $\lambda$ is proportional to 
the fundamental weight corresponding to the unique noncompact simple root.
Then the symbols of the differential operators 
$\pi_{\lambda}(X)$, $X \in \g$,
naturally constitute a holomorphic isomorphism 
from $T^{*} (\Gr/\Kr)$ into $\g \simeq \g^{*}$,
where $\pi_{\lambda}$ denotes the representation 
of the complex Lie algebra $\g$ induced from that of $\Gr$,
the so-called holomorphic discrete series representation.
Note that the Hermitian symmetric case corresponds to 
the case where the flag variety $\flag$ is Grassmannian.

Furthermore, 
the isomorphisms $\{ \mu_{\lambda;\sigma}\}$ acquire equivariance under $\Gc$
if we let $\Gc$ act on $T^* U_\sigma$ by affine transformation 
instead of the canonical linear one.
Since each coset of $W/W_{\lambda}$ is represented by an element $\dot\sigma \in \Gc$,
we can glue together the trivial bundles $\{ T^* U_\sigma \}_\sigma$ 
by transition functions induced from the affine action of $\Gc$
to form a holomorphic twisted cotangent bundle
which we denote by $\twdctgbdl{\flag}$ in this paper.
Since the twisted cotangent bundle is locally isomorphic 
to the (standard) holomorphic cotangent bundle $T^*(\flag)$ by its construction,
we can define local isomorphisms 
from $T^*(\flag)_\lambda|_{U_\sigma}$ 
into $\Omega_\lambda$
by the same formulae as $\{\mu_{\lambda;\sigma}\}$,
which satisfy the compatibility condition
\[
 \mu_{\lambda;\sigma}|_{\varpi^{-1}(U_\sigma \cap U_\tau)}
  =  \mu_{\lambda;\tau}|_{\varpi^{-1}(U_\sigma \cap U_\tau)}
\]
for $\sigma$ and $\tau \in W/W_\lambda$,
where $\varpi:\twdctgbdl{\flag} \to \flag$ is the projection.
By patching together the maps $\{ \mu_{\lambda;\sigma}\}_\sigma$,
we obtain an holomorphic isomorphism $\mu_{\lambda}$ 
from the holomorphic twisted cotangent bundle 
onto the complex coadjoint orbit.

We remark that
when $\lambda=0$ the transition functions of the twisted cotangent bundle $\twdctgbdl{\flag}$,
which are affine transformations of the fibers as mentioned above,
reduces to the canonical transition functions of the cotangent bundle $T^*(\flag)$,
which are linear transformations of the fibers,
and that the map $\mu_{\lambda}$ reduces to the moment map 
from $T^* (\flag)$ into $\g^{*}$.
In this sense, 
the isomorphism $\mu_{\lambda}$ can be regarded as a twisted moment map 
(see \cite{Rossmann91} and \cite{SV96}).

It is well known that the coadjoint orbit possesses a canonical symplectic form called the Kostant-Souriau form, 
and it is shown that our twisted cotangent bundle possesses a (holomorphic) symplectic form, 
which will be denoted by $\omega$ below, expressed locally by the same formula 
as that of the canonical holomorphic symplectic form on the holomorphic cotangent bundle.
The isomorphism $\mu_{\lambda}$ preserves the symplectic forms.
Thus, 
the holomorphic map $\mu_\lambda$ gives a \Gc-equivariant symplectomorphism 
from the holomorphic twisted cotangent bundle of the complex generalized flag variety 
onto the complex coadjoint orbit.
Furthermore, it provides a moment map on the symplectic \Gc-manifold $(\twdctgbdl{\flag},\omega)$.

The rest of this paper is organized as follows.
In Section 2,
we first review the case of the Hermitian symmetric by a basic example, 
then construct holomorphic local isomorphisms $\mu_{\lambda;\sigma}$
from cotangent bundle $T^* U_\sigma$ into the coadjoint orbit 
for $\sigma \in W/W_\lambda$.
In Section 3,
we define an action of $\Gc$ on the cotangent bundle by affine transformation,
and show that $\mu_{\lambda;\sigma}$ is $\Gc$-equivariant.
Replacing the canonical transition functions 
by the ones induced from the affine action of $\Gc$,
we construct the holomorphic twisted cotangent bundle mentioned above,
and show that the maps $\{ \mu_{\lambda;\sigma} \}_\sigma$ 
provides the isomorphism $\mu_{\lambda}$ 
from the twisted cotangent bundle onto the complex coadjoint orbit.
Finally, we prove that the isomorphism $\mu_{\lambda}$ is symplectic.
As an application, we give an explicit embeddings of coadjoint \Gr-orbits into the twisted cotangent bundles 
when $(\Gr,\Kr)$ is a classical Hermitian symmetric space of noncompact type.


\section{Twisted moment map}
\label{sec:1}

Throughout,
let $\Gc$ be a linear connected complex reductive Lie group 
with Lie algebra $\g$.
We fix a Cartan subalgebra $\h$ of $\g$ 
and denote the dual space of $\h$ by $\h^{*}$.
Let $\g=\h \oplus \bigoplus_{\alpha \in \Delta} \g_{\alpha}$
be the root space decomposition
with $\Delta$ a root system of $\g$ with respect to $\h$.
Choosing a positive root system $\Delta^+ \subset \Delta$,
we set $\mathfrak b:=\h \oplus \bigoplus_{\alpha \in \Delta^+} \g_{-\alpha}$. 
We take a nonzero root vector $E_{\alpha}$ from $\g_{\alpha}$ 
for each $\alpha \in \Delta$.

\subsection{Hermitian symmetric space}
\label{sec1-1}

In this subsection,
let $(\Gr,\Kr)$ denote a classical Hermitian symmetric pair 
of noncompact type. 
Let $\Gc$ and $\Kc$ be the complexifications of $\Gr$ and $\Kr$ respectively
and $\Qmax$ a maximal parabolic subgroup of $\Gc$ 
whose Levi factor is $\Kc$.
Let $\mathfrak k$ and $\qmax$ be the Lie algebras of $K$ and $\Qmax$ 
respectively.
Denoting by $\u^{-}$ the nilradical of $\qmax$
and by $\u$ its opposite,
we put $U:=\exp \u$ and $U^{-}:=\exp \u^{-}$.

Consider a holomorphic character $\lambda : \Qmax \to \C^{\times}$
whose differential restricted on $\h$ is proportional to
the fundamental weight corresponding to the unique noncompact simple root.
Let us denote by $\C_{\lambda}$ 
the one-dimensional representation $(\lambda,\C)$ of $\Qmax$.
Then one can construct an irreducible unitary representation 
$(\pi_{\lambda},{\mathcal H}_{\lambda})$ of $\Gr$,
the so-called holomorphic discrete series representation,
by Borel-Weil theory as follows.
Let ${\mathcal L}_{\lambda}$ be the pull-back by the open embedding 
$\Gr/\Kr \hookrightarrow \Gc/\Qmax$ 
of the holomorphic line bundle $\Gc \times_{\Qmax} \C_{\lambda}$
associated to the principal bundle $\Gc \to \Gc/\Qmax$. 
The Hilbert space ${\mathcal H}_{\lambda}$ consists of 
square-integrable holomorphic sections for ${\mathcal L}_\lambda$,
which we identify with the space of holomorphic functions 
$f$ on the open subset $\Gr \Qmax \subset \Gc$ 
that satisfy the following conditions
\begin{equation*}
  f(xq) = \lambda(q)^{-1}f(x) \quad (x \in \Gr \Qmax, q \in \Qmax)
     \qquad  \textrm{and} \qquad
 \int_{\Gr} |f(g)|^2 \dd g < \infty,
\end{equation*}
where $\dd g$ denotes the Haar measure on $\Gr$.
If $\lambda$ satisfies certain conditions, $\mathcal H_\lambda$ is non trivial.
Now the irreducible unitary representation $\pi_{\lambda}$ of $\Gr$ is defined by
\[
 \pi_{\lambda}(g)f(x):=f(g^{-1} x)
     \quad  \textrm{for} \quad f \in {\mathcal H}_{\lambda}.
\]
This induces a complex linear representation of $\g$,
which we also denote by $\pi_{\lambda}$
(see \cite{Knapp86} for details).

Take a basis $\{X_i\}$ for $\g$,
and its dual basis $\{ X_i^{\vee} \}$,
i.e.~the basis for $\g$ satisfying that
\begin{equation*}
 B(X_i,X_j^{\vee})=\delta_{ij},
\end{equation*}
where $B$ is the nondegenerate invariant symmetric bilinear form on $\g$.
For $X \in \g$ given, 
denoting by $\sigma_{\lambda}(X)(x,v^*)$
the symbol of the differential operator 
$\pi_{\lambda}(X)$ at $x \in \Gr/\Kr$
with cotangent vector $v^* \in T^{*}_x(\Gr/\Kr)$,
we define
\[
 \mu_{\lambda;0}(x,v^*)
  :=\sum_{i} \sigma_{\lambda}(X_i)(x,v^*) \otimes X_i^\vee.
\]
Note that $\mu_{\lambda;0}$ is independent of the basis $\{ X_i \}$ chosen. 

Recall from \cite{MS95}, for example, 
that for a Lie group $A$ and an $A$-manifold $M$,
the cotangent bundle $T^{*}M$ 
is a symplectic $A$-manifold 
in the canonical way.
Namely, the Lie group $A$ acts on $T^{*}M$ by
\begin{equation}
\label{e:canonical_action_on_ctg_bdle}
 g.(x,v^*)=(g.x, (g^{-1})^{*} v^*)
\end{equation}
for $x \in M$ and $v^* \in T^{*}_x M$, 
where $(g^{-1})^{*}$ denotes the transpose map of the differential
$(g^{-1})_{*}: T_{g.x} M \to T_{x} M$
induced from the translation by 
$g^{-1}$ 
on the base manifold $M$.

Then, 
for $X \in \mathfrak{a}:=\Lie(A)$, 
the moment map defined on the cotangent bundle 
$\mu : T^{*} M \to \mathfrak{a}^{*}$ is given by
\begin{equation}
\label{e:def_of_moment_map}
 \langle \mu(x,v^*),X \rangle = v^*(X_{M}(x))
         \qquad  
      (x \in M, v^* \in T^{*}_{x} M), 
\end{equation}
where $\mathfrak{a}^{*}$ denotes the dual of $\mathfrak{a}$,
$\langle \cdot,\cdot \rangle$ the canonical pairing between $\mathfrak{a}^{*}$ and $\mathfrak{a}$,
and $X_{M}$ the vector field on $M$ generated by $X$:
\begin{equation}
\label{e:def_vec_field}
 X_{M}(x) \,\varphi = \left. \frac{\dd}{\dd t} \right|_{t=0} \varphi( \exp(-t X).x ) 
\end{equation}
for functions $\varphi$ defined around $x \in M$.

It follows from \eqref{e:def_of_moment_map} and 
\eqref{e:def_vec_field} that
the principal part of $\mu_{\lambda;0}$ is identical to 
the moment map $\mu: T^{*}(\Gr/\Kr) \to \g^{*}$
composed by the isomorphism $\g^{*} \simeq \g$ 
via the bilinear form $B$,
which we also denote by $\mu$.
Here $T^*(\Gr/\Kr)$ denotes the holomorphic cotangent bundle 
of the Hermitian symmetric space $\Gr/\Kr$.
Furthermore, 
the total symbol $\mu_{\lambda;0}$ 
can be regarded as a variant of the twisted moment map 
$\tilde\mu_{\lambda} : T^{*}(\Gr/\Kr) \to \g^{*} \simeq \g$
due to Rossmann (see \cite{Rossmann91}, or \S 7 of \cite{SV96}).
In fact, the difference $\tilde\mu_{\lambda}-\mu$,
which is denoted by $\lambda_{x}$ with $x \in \Gc/\Qmax$ therein,
can be expressed as 
$ \tilde\mu_{\lambda}-\mu = \Ad(g) \lambda^{\vee}$, or
\begin{equation}
\label{e:SV_mu_lambda}
 \tilde\mu_{\lambda}(x,v^*) = \Ad(g) \lambda^{\vee}+\mu(x,v^*), 
   \notag
\end{equation}
where 
$\lambda^{\vee} \in \g$ corresponds to $\lambda \in \g^{*}$ 
under the isomorphism $\g^{*} \simeq \g $ 
via the bilinear form $B$, 
and $g$ is an element of a compact real form $G_u$ of $\Gc$
such that $x=g.e_{\Qmax}$
with $e_{\Qmax}$ the origin of $\Gc/\Qmax$.
Now, if $x$ is in the open subset $\Gr/\Kr \subset \Gc/\Qmax$,
one can choose a unique element $u_x$ 
from a certain open subset of $U$ so that $x=u_x.e_{\Qmax}$,
instead of $g$ from $G_u$.
Then, one can immediately verify that
\begin{equation}
\label{e:our_mu_lambda}
\mu_{\lambda;0}(x,v^*)=\Ad(u_x)\lambda^{\vee}+\mu(x,v^*).
   \notag
\end{equation}
Moreover, the following relation holds:
\begin{align}
\label{e:equivariance_under_U}
 \Ad(u_x^{-1})\mu_{\lambda;0} (x,v^*)
  &= \mu_{\lambda;0} (\dot{e}, u_{x}^{*}\, v^*)
\end{align}
(see \cite{KinvDO}).

%
%
\begin{example}
\label{ex:su(p,q)}
Let $(\Gr,\Kr)=( \SU(p,q), \mathrm{S}(\U(p) \times \U(q) )$ $(p \geqsl q)$,
where we realize $\SU(p,q)$ as
\[
 \SU(p,q) =\{ g \in \SL{p+q}(\C); \tp{\bar{g}} I_{p,q} g=I_{p,q} \}  
\]
with
$ I_{p,q}
  =\left[
     \begin{smallmatrix} 1_p &  \\  & -1_q \end{smallmatrix}
   \right].
$
Then we take $K, \Qmax$ to be given by
\begin{align}
 K &=\left\{ \left[ \begin{smallmatrix} 
                        a & 0 \\ 0 & d 
		    \end{smallmatrix}
           \right] \in \SL{p+q}(\C)
                     ; a \in \GL{p}(\C), d \in \GL{q}(\C)
     \right\},
          \notag     \\
\Qmax &=\left\{ \left[ \begin{smallmatrix} 
                         a & 0 \\ c & d 
                      \end{smallmatrix}
               \right] \in \SL{p+q}(\C)
                     ; a \in \GL{p}(\C), d \in \GL{q}(\C), c \in \Mat{q,p}(\C) 
     \right\},
   \label{e:Qmax} 
\end{align}
respectively.
We can assume that
the holomorphic character $\lambda: \Qmax \to \C^{\times}$ in this case
is given by
\begin{equation}
\label{e:lambda_su(p,q)}
 \lambda (\begin{bmatrix} 
            a & 0 \\ c & d 
          \end{bmatrix}) 
  =(\det d)^{-s}
\end{equation}
for some integer $s$.

Note that $\Gr/\Kr$ is isomorphic to 
the bounded symmetric domain given by
\[
 \left\{ z=(z^{i j}) \in \Mat{p,q}(\C) ; 
          1_q - \tp{\bar z}z \textrm{ is positive definite}
 \right\}.
\]
Therefore, 
we can take holomorphic coordinates 
$(z^{i j},\xi_{i j})_{i=1,\dots,p; j=1,\dots,q}$ 
around an arbitrary point $(x,v^*)$
on the whole $T^{*}(\Gr/\Kr)$ such that
\begin{equation}
\label{e:w_and_xi_su(p,q)}
 v^* = \sum_{i,j} \xi_{ij} \dd z^{ij}.
   \notag
\end{equation}
Using the fact that $\u$ is abelian,
it is easy to show that 
the right-hand side of \eqref{e:equivariance_under_U} equals
\begin{equation}
\label{e:Ad_w}
 \begin{bmatrix}
    \frac{q}{p+q} s 1_p & 0 \\[2pt] -\xi & -\frac{p}{p+q} s 1_q 
 \end{bmatrix}
\end{equation}
(Theorem 4.9, \cite{KinvDO}),
where 
we denote the complex $q \times p$-matrix $\tp{(\xi_{ij})}$ by $\xi$.
Thus, if $s \ne 0$ and if we put
\begin{equation}
\label{e:prototype_key_relation}
 w:=- s^{-1} \xi 
  \quad \textrm{and} \quad
 u^{-}_w := \left[ 
              \begin{matrix} 
                1 & 0 \\ w & 1 
              \end{matrix} 
            \right],
\end{equation}then it is immediate to show that
\eqref{e:Ad_w} is equal to $\Ad(u^{-}_w) \lambda^{\vee}$.
Hence
\begin{equation}
 \mu_{\lambda;0}(x,v^*) = \Ad(u_x) \Ad(u^{-}_w) \lambda^{\vee}.
   \notag
\end{equation}
This yields an injective holomorphic map 
\(
 \mu_{\lambda;0}: T^{*} (\Gr/\Kr) \to \g,
\)
which is a prototype of our main object.

Observe that
there is no need to restrict the domain of $\mu_{\lambda;0}$ to $T^{*}(\Gr/\Kr)$.
Indeed, 
it naturally extends to the holomorphic cotangent bundle 
of the open subset $U \Qmax/\Qmax \subset \Gc/\Qmax$ 
if we do not take the real form $\Gr$ into account.
Furthermore, we can take an arbitrary $\lambda \in \h^{*}$;
we shall carry out this extended case in the next subsection. 
\end{example}

\subsection{Generalized flag variety}

Let $\Gc$ be a linear connected complex reductive Lie group 
with Lie algebra $\g$, as above.
Consider a nonzero $\lambda \in \h^{*}$,
which is not necessarily the same as in the previous subsection.
Put
\[
 \mathfrak l:=\g(\lambda)=\{ X \in \g ; \ad^{*}(X)\lambda=0 \};
\]
we assume that $\mathfrak l$ is distinct from $\g$ throughout.
Let $\q$ be a parabolic subalgebra of $\g$ containing $\mathfrak b$ 
whose Levi part is $\mathfrak l$.
We assume that $\q$ is not necessarily maximal.
Let $\u^{-}$ be the nilradical of $\q$,
and $\u$ the opposite of $\u^{-}$.
Our assumption on $\q$ implies that
the subalgebras $\u^{-}$ and $\u$ need not be abelian.
At any rate, 
we have the following decompositions:
\begin{equation}
\label{e:triangular_decomp}
 \q = \mathfrak l \oplus \u^{-},
   \quad \textrm{and} \quad 
 \g = \u \oplus \mathfrak l \oplus \u^{-}.
\end{equation}
Denote by $\Delta(\u)$ the subset of $\Delta^+$ such that
$\u=\bigoplus_{\alpha \in \Delta(\u)} \g_{\alpha}$
and $\u^{-}=\bigoplus_{\alpha \in \Delta(\u)} \g_{-\alpha}$.

Let $L:=G(\lambda)=\{ g \in \Gc; \Ad^{*}(g) \lambda=\lambda\}$,
the isotropy subgroup of $\lambda$ in $\Gc$. 
Denoting the analytic subgroup of $\u^{-}$ (resp. $\u$)
by $U^{-}$ (resp. $U$),
let us introduce holomorphic coordinates $z=(z^\alpha)_{\alpha \in \Delta(\u)}$ on $U$ 
and $w=(w_\alpha)_{\alpha \in \Delta(\u)}$ on $U^-$
by parametrizing elements $u \in U$ and $u^- \in U^-$ as
\[
 u=\exp \sum_{\alpha \in \Delta(\u)} z^\alpha E_\alpha 
   \quad \text{and} \quad
 u^-=\exp \sum_{\alpha \in \Delta(\u)} w_\alpha E_{-\alpha },
\]
which we denote by $u_z$ and $u^-_w$ respectively.

Put $Q=LU^{-}=U^{-}L$,
and let $T^{*}(\flag)$ denote the \emph{holomorphic} cotangent bundle 
of the flag variety $\flag$ 
and
\begin{align*}
 p   &: G \to \flag, 
     \\
 \pi &: T^{*}(\flag) \to \flag
\end{align*}
the canonical projections.
Fixing a representative $\dot\sigma \in \Gc$ 
of each $\sigma \in W/W_{\lambda}$ once and for all,
let us identify $\sigma$ with $\dot \sigma$,
where $W_{\lambda}$ denotes the isotropy subgroup 
of $\lambda$ in the Weyl group $W$.
Take the open covering $\{ U_{\sigma}\}$ of $\flag$:
\begin{equation}
\label{e:open_covering_of_the_flag_variety}
\flag= \bigcup_{\sigma \in W/W_\lambda} U_\sigma
   \quad \textrm{with} \quad
  U_\sigma := \sigma U Q/Q.
\end{equation}
Since any element $x$ of $U_\sigma$ is expressed as
\begin{equation}
 x=\sigma u. e_Q 
\end{equation}
for a unique $u \in U$,
one can introduce holomorphic local coordinates 
$z_{\sigma}=({z_\sigma}^{\alpha})_{\alpha \in \Delta(\u)}$ 
on $U_\sigma$ by
\begin{equation}
\label{e:parametrization_of_u}
 u  = \exp \sum_{\alpha \in \Delta(\u)} {z_\sigma}^{\alpha} E_{\alpha}.
\end{equation}
We denote the element $u$ in \eqref{e:parametrization_of_u}
by $u_{z_\sigma}$ in what follows.
Then, every cotangent vector $v^* \in T^{*}_x (G/Q)$
can be written as
\[
 v^{*}
   = \sum_{\alpha \in \Delta(\u)} \xi_{\sigma \alpha} \dd {z_{\sigma}}^{\alpha},
\]
which provides holomorphic coordinates $(z_{\sigma},\xi_{\sigma})$ 
on $\pi^{-1}(U_{\sigma})$ 
with $\xi_\sigma=(\xi_{\sigma \alpha})_{\alpha \in \Delta(\u)}$.
In other words, one obtains a local triviality
\begin{equation}
 \phi_{\sigma}: \pi^{-1}(U_{\sigma}) 
   \xrightarrow{\sim} U_{\sigma} \times \C^n,
  \quad (x,v^*) \mapsto (z_{\sigma},\xi_{\sigma}),
\end{equation}
with $n=\# \Delta(\u)=\dim(\flag)$ for each $\sigma \in W/W_\lambda$. 
In the sequel, however,
if $(z_{e},\xi_{e})$ is in $\pi^{-1}(U_e)$ 
i.e., if $\sigma$ happens to equal the identity element $e$,
we suppress the subscripts and just write $(z,\xi)$ 
for brevity.

%
%
\begin{remark}
\label{r:key_remark}
For $\sigma \in W/W_{\lambda}$ given,
we have a unique decomposition 
$p^{-1}(U_{\sigma}) = \sigma U Q = \sigma U U^{-} L$.
Namely, any $g \in p^{-1}(U_{\sigma})$ 
uniquely factorizes into a product 
\begin{equation}
\label{e:big_cell_decomp}
 g=\sigma u u^{-} t 
 \quad (u \in U, u^{-} \in U^{-}, t \in L),
\end{equation}
which plays a role throughout the paper, 
as we shall see.
\end{remark}

Now, let us fix an element $\sigma \in W/W_{\lambda}$,
and discuss inside the product bundle $T^{*} U_{\sigma}=\pi^{-1}(U_{\sigma})$
until the end of this section.

%
%
\begin{lem}
\label{l:key_relation}
For $(z_{\sigma},\xi_{\sigma}) \in T^{*} U_{\sigma}$ given,
there correspond unique 
$u_{z_\sigma} \in U$ and $u^{-}_{w_\sigma} \in U^{-}$ such that
\begin{equation}
\label{e:key_relation}
 z_{\sigma} = \sigma u_{z_\sigma}.{e_Q}
     \quad \textrm{and} \quad
 \xi_{\sigma} 
     = -\< \Ad^{*}(u^{-}_{w_\sigma}) \lambda, \inv{u_{z_\sigma}} \dd u_{z_\sigma} \>,
\end{equation}
where we identify%
     \footnote{We shall sometimes use this convention throughout the paper.}
$\xi_\sigma=(\xi_{\sigma \alpha})_{\alpha \in \Delta(\u)}$ with 
$\sum_{\alpha \in \Delta(\u)} \xi_{\sigma \alpha} \dd {z_\sigma}^\alpha$,
which we abbreviate $\xi_\sigma \dd z_\sigma$.
\end{lem}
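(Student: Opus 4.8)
The plan is the following. The first identity in \eqref{e:key_relation} is nothing but the definition of the coordinate $z_\sigma$: given $(z_\sigma,\xi_\sigma)\in T^*U_\sigma$, its base point $x\in U_\sigma$ factors uniquely as $x=\sigma u.e_Q$ with $u\in U$, because $U_\sigma=\sigma UQ/Q$ (cf.\ Remark~\ref{r:key_remark}), and one sets $u_{z_\sigma}:=u$. Since moreover the parametrizations $z_\sigma\mapsto u_{z_\sigma}=\exp\sum_\alpha {z_\sigma}^\alpha E_\alpha$ and $w_\sigma\mapsto u^-_{w_\sigma}=\exp\sum_\alpha w_{\sigma\alpha}E_{-\alpha}$ have exactly the same shape for every $\sigma$, and no $\sigma$-conjugation occurs on the right-hand side of \eqref{e:key_relation}, it is enough to prove the second identity for $\sigma=e$; I suppress $\sigma$ from now on.

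So the task is: for a fixed $u_z\in U$, show that the map $U^-\to T^*_xU_e$ sending $u^-_w$ to $-\langle\Ad^*(u^-_w)\lambda,\,u_z^{-1}\dd u_z\rangle$ is a bijection onto the fibre. First I would record that $u_z^{-1}\dd u_z$ is a $\u$-valued holomorphic $1$-form: it is the pull-back of the Maurer–Cartan form of $U$ under the biholomorphism $z\mapsto u_z$ of $\C^n$ onto $U$, and $\ad X$ preserves $\u$ for $X\in\u$, so the series $\frac{1-e^{-\ad X}}{\ad X}(\dd X)$ with $X=\sum_\alpha z^\alpha E_\alpha$ stays in $\u$. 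Being a coframe, it can be written $u_z^{-1}\dd u_z=\sum_{\beta\in\Delta(\u)}\theta^\beta\otimes E_\beta$ with $\theta^\beta=\sum_\gamma A^\beta_{\gamma}(z)\,\dd z^\gamma$ and $A(z)=(A^\beta_\gamma(z))$ invertible at every point. Pairing then gives $\xi_\gamma=-\sum_\beta A^\beta_{\gamma}(z)\,\langle\Ad^*(u^-_w)\lambda,E_\beta\rangle$, an invertible linear relation for each fixed $z$; hence recovering $u^-_w$ from $\xi$ is equivalent to recovering it from the tuple $\bigl(\langle\Ad^*(u^-_w)\lambda,E_\beta\rangle\bigr)_{\beta\in\Delta(\u)}$, i.e.\ from $(\Ad^*(u^-_w)\lambda)|_{\u}\in\u^*$.

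It therefore remains to prove that $\Phi\colon U^-\to\u^*$, $u^-_w\mapsto(\Ad^*(u^-_w)\lambda)|_{\u}$, is bijective. I would do this by expanding $\langle\Ad^*(u^-_w)\lambda,E_\beta\rangle=\langle\lambda,\Ad((u^-_w)^{-1})E_\beta\rangle$ via $\Ad(\exp Y)=e^{\ad Y}$ with $Y=-\sum_\alpha w_\alpha E_{-\alpha}$, and extracting the $\h$-component (the only part that pairs nontrivially with $\lambda\in\h^*$). A monomial $w_{\alpha_1}\cdots w_{\alpha_k}\,[E_{-\alpha_1},[\dots,[E_{-\alpha_k},E_\beta]\dots]]$ lies in $\g_{\beta-\alpha_1-\dots-\alpha_k}$, so it contributes to the $\h$-component only when $\beta=\alpha_1+\dots+\alpha_k$ with all $\alpha_j\in\Delta(\u)$. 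For $k=1$ this forces $\alpha_1=\beta$ and produces the linear term $w_\beta\langle\lambda,[E_\beta,E_{-\beta}]\rangle$, whose coefficient is nonzero: $[E_\beta,E_{-\beta}]$ is a nonzero element of $\h$, and $\lambda$ does not vanish on it precisely because $\beta$ is a root of $\u$ and hence not a root of the Levi $\mathfrak l=\g(\lambda)$ — this is the one place where the hypothesis on $\q$ enters. For $k\geq2$ each $\alpha_j$ has strictly smaller height than $\beta$, so all higher-order contributions involve only the variables $w_\gamma$ with $\mathrm{ht}(\gamma)<\mathrm{ht}(\beta)$. Thus $\Phi$ has the triangular form $\langle\Ad^*(u^-_w)\lambda,E_\beta\rangle=d_\beta w_\beta+P_\beta\bigl(w_\gamma:\mathrm{ht}(\gamma)<\mathrm{ht}(\beta)\bigr)$ with $d_\beta\neq0$ and $P_\beta(0)=0$, so it is a polynomial automorphism of $\C^n$ — its inverse is built recursively on height — and in particular a bijection. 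Combining this with the reduction above yields the required unique $u^-_{w_\sigma}$.

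I expect the main work to be in the last two paragraphs: confirming that $u_z^{-1}\dd u_z$ is genuinely $\u$-valued and a coframe in the general (non-abelian) situation, and, more delicately, nailing down the triangular structure of $\Phi$ together with the non-vanishing of the diagonal coefficients $d_\beta$ under a consistent choice of sign and normalization conventions for $\Ad^*$ and for $[E_\beta,E_{-\beta}]$. The reduction in the first two paragraphs and the inductive inversion of a height-triangular polynomial map are then routine.
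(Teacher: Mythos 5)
Your proof is correct and follows essentially the same route as the paper's: reduce, via the $\u$-valued coframe $u_z^{-1}\dd u_z$ and the nondegeneracy of $B|_{\u^-\times\u}$, to the bijectivity of $u^-_w\mapsto \Ad^{*}(u^-_w)\lambda$ onto $\lambda+\u^{*}$-data, and then invert by induction on root height. You are merely more explicit than the paper about the two points it leaves tacit, namely that $u_z^{-1}\dd u_z$ is genuinely a coframe and that the diagonal coefficients $\langle\lambda,[E_\beta,E_{-\beta}]\rangle$ are nonzero because $\beta\in\Delta(\u)$ is not a root of the Levi $\g(\lambda)$.
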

\begin{proof}
It is trivial that such $u_{z_\sigma} \in U$ uniquely exists.
If we identify $\g^{*}$ with $\g$ 
via the nondegenerate invariant symmetric bilinear form $B$ on $\g$,
the second formula of \eqref{e:key_relation}
can be rewritten as
\[
 B( \Ad(u^{-}_{w_\sigma}) \lambda^{\vee}, \inv{u_{z_\sigma}} \dd u_{z_\sigma} ) 
   = -\xi_{\sigma},  
\]
where $\lambda^{\vee} \in \h$ corresponds to $\lambda \in \h^{*}$
under the identification.

Now, since $u_{z_\sigma}^{-1} \dd u_{z_\sigma}$ is a 1-form taking values in $\u$,
the nondegeneracy of $B|_{\u^{-} \times \u}$ implies that
there exists a unique $Y \in \u^{-}$ satisfying
\[
 B(Y,\inv{u_{z_\sigma}} \dd u_{z_\sigma}) = -\xi_{\sigma}.
\]
Thus it suffices to show that
there exists a unique $u^{-}_{w_\sigma} \in U^{-}$ such that
\begin{equation}
\label{e:pairing_between_u_and_ubar}
 \Ad(u^{-}_{w_\sigma}) \lambda^{\vee} = \lambda^{\vee} + Y
\end{equation}
since $\Ad(u^{-}_{w_\sigma}) \lambda^{\vee}$ is 
in $\mathfrak l \oplus \u^{-}$ 
with its $\mathfrak l$-component equal to $\lambda^{\vee}$.
 Parametrizing $u^{-}_{w_\sigma}$ as
\begin{equation}
\label{e:parametrization_of_u^{-}}
 u^{-}_{w_\sigma}=\exp \sum_{\alpha \in \Delta(\u)} w_{\sigma \alpha} \, E_{-\alpha},
\end{equation}
one can determine the coefficients $w_{\sigma \alpha}$ 
inductively from \eqref{e:pairing_between_u_and_ubar}
with respect to the height of $\alpha$.
This completes the proof.
\end{proof}

%
%
\begin{example}
Let us consider the case where $\Gc=\GL{3}(\C)$ 
and a regular semisimple 
$\lambda= \sum_{i=1}^{3} \lambda_i \epsilon_i \in \h^{*}$, 
i.e., with $\lambda_i \ne \lambda_{j}$ if $i \ne j$.
Then $\mathfrak l=\g(\lambda)$ is equal to $\h$, the Cartan subalgebra 
consisting of all diagonal matrices in $\g=\gl_{3}(\C)$,
$\q$ the Borel subalgebra $\mathfrak{b}$ of all lower triangular matrices in $\g$,
and $\u^{-}$ (resp. $\u$) the nilpotent subalgebra 
of all strictly lower (resp. upper) matrices in $\g$.

We restrict ourselves to the case where $\sigma=e$ 
since the other cases are similar.
For $(z,\xi) \in T^{*} U_{e}$  
with $z=(z^{i,j})_{1 \leqsl i < j \leqsl 3}$ 
and $\xi \dd z=\sum_{1 \leqsl i < j \leqsl 3} \xi_{i,j} \dd z^{i,j}$,
if one writes 
\[
 u_z
   = \begin{bmatrix} 
        1 & z^{1,2} & z^{1,3}+\frac12 z^{1,2} z^{2,3} \\[1.5pt]
          & 1      & z^{2,3} \\ 
          &        & 1  
     \end{bmatrix},
   \quad
 u^{-}_{w}
   = \begin{bmatrix} 
        1                             &        &   \\
        w_{1,2}                        & 1      &   \\ 
        w_{1,3}+\frac12 w_{1,2} w_{2,3}  & w_{2,3} & 1  
     \end{bmatrix}
\]
as in \eqref{e:parametrization_of_u} and \eqref{e:parametrization_of_u^{-}},
the second formula of \eqref{e:key_relation} is equivalent to
\begin{align*}
 \lambda_{1,2} w_{1,2} 
  + \frac12 z^{2,3} \left( \lambda_{1,3} w_{1,3} + \frac12 (\lambda_{1,2}-\lambda_{2,3}) w_{1,2} w_{2,3} \right)
          &= -\xi_{1,2},
     \\
 \lambda_{2,3} w_{2,3} 
  - \frac12 z^{1,2} \left( \lambda_{1,3} w_{1,3} + \frac12 (\lambda_{1,2}-\lambda_{2,3}) w_{1,2} w_{2,3} \right) 
          &= -\xi_{2,3},
     \\
 \lambda_{1,3} w_{1,3} 
    + \frac12 (\lambda_{1,2}-\lambda_{2,3}) w_{1,2} w_{2,3} &= -\xi_{1,3},
\end{align*}
from which it immediately follows that
\begin{align*}
 w_{1,2} &= \frac1{\lambda_{1,2}}
              \Bigl( -\xi_{1,2} + \frac12 \xi_{1,3}z^{2,3} \Bigr), 
     \\ 
 w_{2,3} &= \frac1{\lambda_{2,3}}
              \Bigl( -\xi_{2,3} - \frac12 \xi_{1,3} z^{1,2} \Bigr), 
     \\
 w_{1,3} &= \frac1{\lambda_{1,3}}
              \left( -\xi_{1,3} 
                     - \frac12 \frac{\lambda_{1,2}
                     -\lambda_{2,3}}{\lambda_{1,2} \lambda_{2,3}} 
                        \Bigl( -\xi_{1,2}+\frac12 \xi_{1,3}z^{2,3} \Bigr)
                        \Bigl( -\xi_{2,3}-\frac12 \xi_{1,3}z^{1,2} \Bigr) 
              \right),
\end{align*}
where we put $\lambda_{i,j}:=\lambda_{i}-\lambda_{j}$ for $i \ne j$.

We remark that one can verify that the relation 
\eqref{e:equivariance_under_U} holds 
if one constructs an irreducible representation of $\GL{3}(\C)$ 
that is induced from the character $\lambda :Q \to \C^{\times}$ 
by Borel-Weil theory as in the previous subsection.
\end{example}

Put 
\(
\Omega_{\lambda}:=\Gc \cdot \lambda%
  =\{ \Ad^{*}(g)\lambda \in \g^{*}; g \in \Gc \},
\)
the coadjoint orbit of $\lambda$ under the complex Lie group \Gc. 
It is canonically isomorphic to $G/L$,
and we denote by $p_\lambda$ the canonical surjection
\[
 p_\lambda : \Gc \to \Omega_{\lambda},
     \quad g \mapsto \Ad^{*}(g) \lambda.
\]
%
%
\begin{definition}
\label{d:local_twisted_mmap}
By Lemma \ref{l:key_relation} above, 
one can define a holomorphic map 
\begin{equation}
\label{e:definition_of_local_mu}
 \mu_{\lambda;\sigma}: T^{*} U_{\sigma} \to \Omega_{\lambda} 
     \quad \textrm{by} \quad 
 \mu_{\lambda;\sigma} (z_{\sigma},\xi_{\sigma}) 
        := \Ad^{*}(\sigma u_{z_\sigma} u^{-}_{w_\sigma}) \lambda,
\end{equation}
where $u_{z_\sigma} \in U$ and $u^{-}_{w_\sigma} \in U^{-}$ are the unique elements 
corresponding to $(z_{\sigma},\xi_{\sigma}) \in T^{*} U_{\sigma}$ 
determined by the relation \eqref{e:key_relation}.
Note in particular that 
$\mu_{\lambda;\sigma}$ is injective.
\end{definition}
%
%
%
\begin{remarks}
\label{r:expression_of_covector}
(i)\;
If $g \in \Gc$ satisfies that
\[
 \mu_{\lambda;\sigma}(z_{\sigma},\xi_{\sigma})=\Ad^{*}(g) \lambda
\]
for $(z_{\sigma},\xi_{\sigma}) \in T^{*} U_{\sigma}$,
then there exists an element $t \in L$ such that 
$g=\sigma u_{z_\sigma} u^{-}_{w_\sigma} t$.
The correspondence 
$(z_{\sigma},\xi_{\sigma}) \mapsto g= \sigma u_{z_\sigma} u^{-}_{w_\sigma} t$
can be regarded as a section for the fibration 
$p^{-1}(U_{\sigma}) \to T^{*} U_{\sigma}$:
\begin{equation}
\label{cd:local_section} 
\xymatrix@C20mm@R12mm{
       &  p^{-1}(U_{\sigma}) 
                 \ar@{->}[d]^{p_\lambda}    
           \\
  T^{*} U_{\sigma} \ar@{->}_{\mu_{\lambda;\sigma}}^{\sim} [r] \ar@{..>}^{g} [ur] 
       &  \mu_{\lambda;\sigma}(T^{*} U_{\sigma}). 
}
\notag
\end{equation}

Now, 
let us define a $\g$-valued 1-form $\theta$ on $\Gc$ by
\begin{equation}
\label{e:maurer-cartan}
\theta_g:= g^{-1} \dd g
     \quad (g \in \Gc).
\end{equation}
By abuse of notation,
we use the  same symbol $\theta_g$ 
to denote the pull-back 
of the 1-form given in \eqref{e:maurer-cartan}
by the local section $g: T^{*} U_{\sigma} \to p^{-1}(U_{\sigma})$. 
Then the second formula of \eqref{e:key_relation} can be written as
\begin{equation}
\label{e:xi_by_theta_a}
\begin{aligned}
 \xi_{\sigma} 
  &= - \< \lambda, \theta_{\sigma a} \>
       \\
  &= - \< \lambda, \theta_{a} \>,
\end{aligned}
\end{equation}
where we set $a:=u_{z_\sigma} u^{-}_{w_\sigma}$ for brevity.
In fact,
since
\[
 (\sigma a)^{-1} \dd \,(\sigma a)
 = a^{-1} \dd a
 = \Ad(u^{-}_{w_\sigma})^{-1} (\inv{u_{z_\sigma}} \dd u_{z_\sigma})
    + \inv{u^{-}_{w_\sigma}\,} \dd u^{-}_{w_\sigma},
\]
the relation $\eqref{e:xi_by_theta_a}$ follows from the fact that
the second term is a 1-form taking values in $\u^{-}$.

(ii)\;
If one restricts $\xi$ given in \eqref{e:key_relation} or \eqref{e:xi_by_theta_a}
to the \textit{smooth} cotangent bundle of $\Gr/\Kr$
then one can obtain the reproducing kernel of the irreducible unitary representation 
$(\pi_{\lambda},{\mathcal H}_{\lambda})$ of $\Gr$ 
when $(\Gr,\Kr)$ is a Hermitian symmetric pair 
(see \cite{HOOS93} for details).

\end{remarks}

\section{\Gc -equivariance}

\subsection{Local \Gc-action}

First let us consider such elements $g \in \Gc$ that map $U_e$ onto itself.

%
%
\begin{definition}
\label{d:affine_transf}
For $(z,\xi) \in T^{*} {U_{e}}$, 
let $u_z \in U$ and $u^{-}_w \in U^{-}$ be the unique elements
determined by \eqref{e:key_relation}.
If $g \in \Gc$ satisfies that $g.z \in U_{e}$,
or equivalently, that $g u_z \in p^{-1}(U_{e})$,
then by Remark \ref{r:key_remark}, 
one can write 
\begin{equation}
\label{e:gauss_decomp_gu}
 g u_z = u_{g.z} u^{-}_{g;z} t_{g;z} 
   \quad \text{with} \quad
  u_{g.z}\in U, u^-_{g;z} \in U^{-}, t_{g;z} \in L,
   \notag
\end{equation}
from which it follows that
\begin{align}
\label{e:gauss_decomp_guu}
 g u_z u^{-}_w 
   &= u_{g.z} u^{-}_{g;z} t_{g;z} \cdot u^{-}_w
      \notag \\
   &= u_{g.z} \cdot u^{-}_{g;z} (t_{g;z} u^{-}_w t_{g;z}^{-1}) \cdot t_{g;z}.
\end{align}
In particular, we see that
$g u_z u^{-}_w$ lies in $U U^{-} L$, 
and that its $U$- and $L$-components are identical 
to those of $g u_z$ respectively
since $t_{g;z} u^{-}_w t_{g;z}^{-1}$ is in the subgroup $U^{-}$.

In view of \eqref{e:key_relation} and \eqref{e:gauss_decomp_guu},
it is natural to define a cotangent vector $\lpsi(g) \xi$ by
\begin{equation} 
\label{e:affine_transf}%
 \lpsi(g) \xi
  = - \< \Ad^{*}(u^{-}_{g;z;w}) \lambda, \, u_{g.z}^{-1} \dd u_{g.z} \>, 
\end{equation} 
where we set $u^{-}_{g;z;w}:=u^{-}_{g;z} (t_{g;z} u^{-}_{w} t_{g;z}^{-1})$ 
for brevity.
If we put $a := u_z u^{-}_w$,
then, as we noted in Remark \ref{r:expression_of_covector},
the right-hand side of \eqref{e:affine_transf} can be written as 
\begin{align}
 \lpsi(g) \xi
     &= - \< \lambda, \theta_{g a t_{g;z}^{-1}} \> 
    \label{e:affine_transf2} \\
     &= - \< \lambda, \theta_{ga} \> 
        + \< \lambda, \dd t_{g;z} t_{g;z}^{-1} \>
    \notag  \\
     &= (g^{-1})^{*} \xi
        + \< \lambda, \dd t_{g;z} t_{g;z}^{-1} \>
    \label{e:affine_transf3}
\end{align}
since $\theta_{ga}=(g^{-1})^{*} \theta_a$,
where $(g^{-1})^{*}$ denotes the transpose map of the differential 
$(g^{-1})_{*}:T_{g.z} U_{e} \to T_{z} U_{e}$
induced from the translation by $g^{-1} \in \Gc$ on $U_{e}$.

Note that \eqref{e:affine_transf} implies that 
$\lpsi(g) \xi$ belongs to $T^{*}_{g.z}(\flag)$
since the decomposition \eqref{e:gauss_decomp_guu} is holomorphic,
and that \eqref{e:affine_transf3} reduces to
the canonical $\Gc$-action on the cotangent bundle 
given by \eqref{e:canonical_action_on_ctg_bdle} when $\lambda=0$.
Furthermore,
it follows that the second term in \eqref{e:affine_transf3} is an exact 1-form
since $t_{g;z}$ is an element of $G(\lambda)$
\end{definition}

%
%
\begin{prop}
\label{p:pseudo-action}
Let $(z,\xi) \in T^{*} {U_{e}}$.
For $g, h \in \Gc$ 
such that both $h.z$ and $g h.z$ are in $U_{e}$,
we have
\begin{equation}
\label{e:pseudo-action}
\lpsi(g) (\lpsi(h) \xi) = \lpsi(g h) \xi.
\end{equation}
\end{prop}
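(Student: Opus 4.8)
The plan is to verify the cocycle-type identity $\lpsi(g)(\lpsi(h)\xi) = \lpsi(gh)\xi$ by reducing both sides to the defining formula \eqref{e:affine_transf2}, namely $\lpsi(g')\eta = -\langle \lambda, \theta_{g'a't_{g';z'}^{-1}}\rangle$, and then exploiting the uniqueness of the decomposition in Remark \ref{r:key_remark}. The key observation I would make first is a chain rule for the $L$-component: if $hu_z = u_{h.z}u^-_{h;z}t_{h;z}$ and $g u_{h.z} = u_{gh.z}u^-_{gh;z}t_{gh;z}$, then multiplying gives $(gh)u_z = u_{gh.z}\, u^-_{gh;z}\,(t_{gh;z}u^-_{h;z}t_{gh;z}^{-1})\,(t_{gh;z}t_{h;z})$, so that by uniqueness of the Gauss-type factorization $g\cdot h\cdot z$ has $U$-component $u_{(gh).z} = u_{gh.z}$ and $L$-component $t_{gh;z} = t_{gh;z}\cdot t_{h;z}$ (in the obvious notation where the left side uses $g$ acting on $h.z$). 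In other words, the $U$- and $L$-parts compose in the expected semigroup-cocycle fashion.

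Armed with this, I would compute $\lpsi(g)(\lpsi(h)\xi)$ directly from \eqref{e:affine_transf2}. The point $\lpsi(h)\xi$ lives over $h.z \in U_e$, with associated $u_{h.z}\in U$ and the $U^-$-element $u^-_{h;z;w} = u^-_{h;z}(t_{h;z}u^-_w t_{h;z}^{-1})$; set $b := u_{h.z}u^-_{h;z;w}$. Applying the definition of $\lpsi(g)$ to the pair $(h.z,\lpsi(h)\xi)$ yields $\lpsi(g)(\lpsi(h)\xi) = -\langle\lambda,\theta_{g\,b\,t_{g;h.z}^{-1}}\rangle$. The heart of the argument is to show that $g\,b\,t_{g;h.z}^{-1}$ and $gh\,a\,t_{gh;z}^{-1}$ (with $a = u_z u^-_w$) differ on the right by an element of $L$, i.e.\ that $g b t_{g;h.z}^{-1} = (gh\,a\,t_{gh;z}^{-1})\cdot s$ for some $s\in L = G(\lambda)$; since $\langle\lambda,\theta_{xs}\rangle = \langle\lambda,\theta_x\rangle$ for $s\in L$ (the argument in Remark \ref{r:expression_of_covector}(i), as $\mathrm{d}s\,s^{-1}$ pairs trivially with $\lambda$ up to an exact form — actually here $\langle\lambda, s^{-1}\,\mathrm{d}s\rangle$ vanishes after restriction since... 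I must be careful). The cleaner route: show $gbt_{g;h.z}^{-1}$ and $(gh)at_{gh;z}^{-1}$ have the same $U$-component and the same value of $-\langle\lambda,\theta_{(\cdot)}\rangle$ because both reduce, via \eqref{e:big_cell_decomp}-type factorization and the $\u^-$-valued-1-form vanishing trick of Remark \ref{r:expression_of_covector}(i), to $-\langle\lambda, u_{gh.z}^{-1}\mathrm{d}u_{gh.z}\rangle$ twisted by the correct $U^-$-element, and then match that $U^-$-element against $u^-_{gh;z;w}$ using the composition law above.

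Concretely, I would argue as follows. From \eqref{e:affine_transf}, $\lpsi(g)(\lpsi(h)\xi) = -\langle\Ad^*(u^-_{g;h.z;w'})\lambda,\, u_{gh.z}^{-1}\mathrm{d}u_{gh.z}\rangle$ where $w' $ denotes the coordinates of $u^-_{h;z;w}$, so $u^-_{g;h.z;w'} = u^-_{g;h.z}\,(t_{g;h.z}\,u^-_{h;z;w}\,t_{g;h.z}^{-1})$ and $u_{gh.z} = u_{g.(h.z)}$. Expanding $u^-_{h;z;w} = u^-_{h;z}(t_{h;z}u^-_w t_{h;z}^{-1})$ and using $t_{gh;z} = t_{g;h.z}t_{h;z}$ gives
\[
 u^-_{g;h.z;w'} = u^-_{g;h.z}\,(t_{g;h.z}u^-_{h;z}t_{g;h.z}^{-1})\,(t_{gh;z}u^-_w t_{gh;z}^{-1}).
\]
On the other hand, the composition law $(gh)u_z = u_{gh.z}\,\bigl[u^-_{g;h.z}(t_{g;h.z}u^-_{h;z}t_{g;h.z}^{-1})\bigr]\,t_{gh;z}$ identifies the bracketed factor with $u^-_{gh;z}$, whence $u^-_{g;h.z;w'} = u^-_{gh;z}(t_{gh;z}u^-_w t_{gh;z}^{-1}) = u^-_{gh;z;w}$, and the right-hand side of \eqref{e:affine_transf} for $\lpsi(g)(\lpsi(h)\xi)$ becomes literally that of $\lpsi(gh)\xi$.

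The main obstacle I anticipate is purely bookkeeping: keeping the three-fold uniqueness of the $UU^-L$ factorization straight and making sure the conjugations by $t_{h;z}$, $t_{g;h.z}$, and $t_{gh;z}$ land consistently (in particular that $t$-conjugation preserves $U^-$, which holds since $L$ normalizes $U^-$). There is no genuine analytic or geometric difficulty — once the $U$- and $L$-component cocycle identities are nailed down, \eqref{e:pseudo-action} is a formal consequence of Definition \ref{d:affine_transf} and Lemma \ref{l:key_relation}. I would present it by first stating the composition law for $u_{\bullet.z}$ and $t_{\bullet;z}$ as a short preliminary computation, then the one-line matching of the twisting $U^-$-elements above.
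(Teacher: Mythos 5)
Your argument is correct and rests on exactly the same key computation as the paper's proof, namely the factorization $(gh)u_z = u_{gh.z}\cdot\bigl[u^-_{g;h.z}(t_{g;h.z}u^-_{h;z}t_{g;h.z}^{-1})\bigr]\cdot t_{g;h.z}t_{h;z}$ and the resulting multiplicativity of the $L$-components. The only (cosmetic) difference is that you verify the identity by matching the twisting $U^-$-elements in formula \eqref{e:affine_transf}, whereas the paper works with the equivalent expression \eqref{e:affine_transf2} and simply observes that both sides equal $\<-\lambda,\theta_{gha\,t_{h;z}^{-1}t_{g;h.z}^{-1}}\>$; you were also right to abandon the tentative claim that a right $L$-factor can be discarded from $\theta$, since that factor depends on $z$.
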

\begin{proof}
We use \eqref{e:affine_transf2} to prove the proposition.
Let $u_z \in U$ and $u^{-}_w \in U^{-}$ be 
the unique elements determined by \eqref{e:key_relation}
and let $a=u_z u^{-}_w$.
Since both $h u_z$ and $g u_{h.z}$ are in $p^{-1}(U_{e})$ by assumption,
they decompose as 
\begin{align}
 h u_{z} &= u_{h.z} u^{-}_{h; z} t_{h;z} \in U U^{-}L, 
   \label{e:gauss_decomp_hu}
     \\
 g u_{h.z} &= u_{g h.z} u^{-}_{g;h z} t_{g;h z} \in U U^{-}L.
   \label{e:gauss_decomp_gu_hz}
\end{align}
Then, we see that
\begin{align}
\label{e:gauss_decomp_ghu}
 g(h u_z) &= g ( u_{h.z} u^{-}_{h;z} t_{h;z} )
     \notag \\
   &= u_{g h.z} u^{-}_{g;h.z} t_{g; h.z} 
         \cdot u^{-}_{h;z} t_{h;z}
     \notag \\
   &= u_{g h.z}  
         \cdot u^{-}_{g;h.z} ({t_{g;h.z}}u^{-}_{h;z} t_{g;h.z}^{-1}) 
         \cdot t_{g;h.z} t_{h;z}.
\end{align} 
Namely,
the $L$-component of $ g(h u_z)$ equals 
$t_{g;h.z} t_{h;z}$.

Now, it follows from \eqref{e:gauss_decomp_hu} and \eqref{e:gauss_decomp_gu_hz} 
that
\begin{align*}
\lpsi(g)(\lpsi(h) \xi) 
   &= \lpsi(g)( \lpsi(h) \< -\lambda, \theta_{a} \>  )
     \\
   &= \lpsi(g) \< -\lambda, \theta_{h a t_{h;z}^{-1}} \> 
     \\
   &= \< -\lambda, \theta_{g h a t_{h;z}^{-1} t_{g;h.z}^{-1}} \>.
\end{align*}
On the other hand, 
it follows from \eqref{e:gauss_decomp_ghu} that 
\begin{align*}
 \lpsi(g h) \xi
   &= \< -\lambda, \theta_{(g h) a (t_{g;h.z} t_{h,z})^{-1}}  \>
     \\
   &= \< -\lambda, \theta_{g h a t_{h;z}^{-1} t_{g;h.z}^{-1}} \>.
\end{align*}
This completes the proof.
\end{proof}

%
%
For $(z,\xi) \in T^{*} U_{e}$
and $g \in \Gc$ such that $g.z \in U_{e}$,
we define  
\begin{equation}
\lPsi(g): T^{*} {U_{e}} \to T^{*} {U_{e}}
     \quad \text{by} \quad
  \lPsi(g)(z,\xi) := (g.z, \lpsi(g) \xi).
\end{equation}  
Note that $\lPsi(g)|_{T^{*}_{z} (\flag)}=\lpsi(g)$ is a bi-holomorphic map 
from $T^{*}_{z}(\flag)$ onto $T^{*}_{g.z}(\flag)$ 
for all $z \in U_{e}$.

%
%
\begin{prop}
\label{p:Gc-equivariance} 
For $(z,\xi) \in T^{*}{U_{e}}$ and $g \in \Gc$
such that $g.z \in U_{e}$,
we have
\begin{equation}
\label{e:equivariant}
 \mu_{\lambda;e}( \lPsi(g) (z,\xi) )
  = \Ad^{*}(g) \mu_{\lambda;e} (z,\xi)
\end{equation}
\end{prop}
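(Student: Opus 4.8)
The plan is to reduce the identity \eqref{e:equivariant} to the defining formula \eqref{e:definition_of_local_mu} for $\mu_{\lambda;e}$ and the Gauss-type decomposition \eqref{e:gauss_decomp_guu} established in Definition \ref{d:affine_transf}. First I would unwind the left-hand side: by definition of $\lPsi(g)$ we have $\lPsi(g)(z,\xi)=(g.z,\lpsi(g)\xi)$, so $\mu_{\lambda;e}(\lPsi(g)(z,\xi))=\Ad^{*}(u_{g.z}\,u^{-}_{g.z;w})\lambda$, where $u_{g.z}\in U$ is the $U$-component of $g u_z$ and $u^{-}_{g.z;w}=u^{-}_{g;z}(t_{g;z}u^{-}_{w}t_{g;z}^{-1})$ is the element appearing in \eqref{e:affine_transf}. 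Here one must be slightly careful that the $u^{-}$-part produced by Lemma \ref{l:key_relation} applied to the point $(g.z,\lpsi(g)\xi)$ really is $u^{-}_{g.z;w}$; this is exactly what formula \eqref{e:affine_transf} was designed to guarantee, since \eqref{e:key_relation} determines $u^{-}_{w_\sigma}$ uniquely from $\xi_\sigma$ and $u_{z_\sigma}$, and \eqref{e:affine_transf} writes $\lpsi(g)\xi$ in precisely that shape with $u_{g.z}$ in the role of $u_{z_\sigma}$ and $u^{-}_{g.z;w}$ in the role of $u^{-}_{w_\sigma}$.

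Next I would compute the right-hand side. By \eqref{e:definition_of_local_mu}, $\mu_{\lambda;e}(z,\xi)=\Ad^{*}(u_z u^{-}_w)\lambda$, so $\Ad^{*}(g)\mu_{\lambda;e}(z,\xi)=\Ad^{*}(g u_z u^{-}_w)\lambda$. Now invoke the factorization \eqref{e:gauss_decomp_guu}:
\[
 g u_z u^{-}_w
   = u_{g.z}\cdot u^{-}_{g;z}\bigl(t_{g;z}u^{-}_{w}t_{g;z}^{-1}\bigr)\cdot t_{g;z}
   = u_{g.z}\,u^{-}_{g.z;w}\,t_{g;z}.
\]
Since $t_{g;z}\in L=G(\lambda)$ fixes $\lambda$ under the coadjoint action, $\Ad^{*}(t_{g;z})\lambda=\lambda$, and therefore
\[
 \Ad^{*}(g u_z u^{-}_w)\lambda
   = \Ad^{*}(u_{g.z}\,u^{-}_{g.z;w}\,t_{g;z})\lambda
   = \Ad^{*}(u_{g.z}\,u^{-}_{g.z;w})\lambda.
\]
This is exactly the expression obtained for the left-hand side, so \eqref{e:equivariant} follows.

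The one genuine point requiring care — the step I expect to be the main obstacle — is the verification that the $u^{-}$-factor attached to the point $(g.z,\lpsi(g)\xi)$ by Lemma \ref{l:key_relation} coincides with $u^{-}_{g.z;w}$, i.e.\ that the cotangent vector $\lpsi(g)\xi$ defined in \eqref{e:affine_transf} is genuinely the one whose Lemma~\ref{l:key_relation} data are $(u_{g.z},u^{-}_{g.z;w})$. This is immediate once one observes that \eqref{e:affine_transf} is literally the second formula of \eqref{e:key_relation} with $\sigma=e$, $u_{z_\sigma}$ replaced by $u_{g.z}$ and $u^{-}_{w_\sigma}$ replaced by $u^{-}_{g.z;w}\in U^{-}$, together with the uniqueness clause in Lemma~\ref{l:key_relation}; one also uses that $u_{g.z}$ is indeed the $U$-component of $g u_z$ so that the base point $g.z=u_{g.z}.e_Q$ is correctly recorded by the first formula of \eqref{e:key_relation}. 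Everything else is bookkeeping with the decomposition \eqref{e:gauss_decomp_guu} and the $L$-invariance of $\lambda$.
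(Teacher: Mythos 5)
Your proposal is correct and takes essentially the same route as the paper's own proof: identify the Lemma \ref{l:key_relation} data of $(g.z,\lpsi(g)\xi)$ as $(u_{g.z},u^{-}_{g;z;w})$ via \eqref{e:affine_transf}, then apply the decomposition \eqref{e:gauss_decomp_guu} together with $\Ad^{*}(t_{g;z})\lambda=\lambda$. The only difference is that you spell out the $L$-invariance step and the uniqueness argument for the $U^{-}$-factor, which the paper leaves implicit.
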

\begin{proof}
This is equivalent to the definition of $\lpsi(g)$, 
with $g \in \Gc$, as we shall see soon.
In fact, 
the elements of $U$ and $U^{-}$ 
corresponding to $\lpsi(g)\xi$ by $\mu_{\lambda;e}$
are $u_{g.z}$ and $u^{-}_{g;z;w}$ respectively
in the notation of Definition \ref{d:affine_transf}.
Therefore, the left-hand side of \eqref{e:equivariant} equals 
\begin{align*}
   & \Ad^{*}(u_{g.z} u^{-}_{g;z;w}) \lambda
     \\
 = & \Ad^{*}(g u_z u^{-}_w) \lambda
 =   \Ad^{*}(g) \Ad^{*}(u_z u^{-}_w) \lambda
     \\
 = & \Ad^{*}(g) \mu_{\lambda;e}(z,\xi).
\end{align*}
by \eqref{e:gauss_decomp_guu} and \eqref{e:definition_of_local_mu}.
\end{proof}

%
%
\begin{example}%
[Example \ref{ex:su(p,q)} continued]
\label{ex:su(p,q)_2}
Let $\Gc=\SL{p+q}(\C), \Qmax$ and $\lambda$ be as in Example \ref{ex:su(p,q)}.
Then, for $(z,\xi) \in T^{*} U_{e}$, 
let $g=\left[ \begin{smallmatrix} a & b \\ c & d \end{smallmatrix} \right]$
be an element of $\Gc$ such that $g.z \in U_{e}$.
If one writes 
\(
 \left[\begin{smallmatrix} 1 & \hat z \\ 0 & 1 \end{smallmatrix} \right]:=u_{g.z},
 \left[\begin{smallmatrix} 1 & 0 \\ \hat w & 1 \end{smallmatrix}\right]:=u^{-}_{g;z;w}
\)
and
\(
 \left[\begin{smallmatrix} \hat a & 0 \\ 0 & \hat d \end{smallmatrix}\right]:=t_{g;z}
\)
in the decomposition \eqref{e:gauss_decomp_guu},
an elementary matrix calculation shows that
\allowdisplaybreaks
\begin{align*}
\hat z &= (az+b)(cz+d)^{-1} = g.z, 
   \\
\hat w &=\bigl( c+(cz+d)w \bigr) \bigl( a-(az+b)(cz+d)^{-1}c \bigr)^{-1},
   \\
\hat a &= a-(az+b)(cz+d)^{-1}c,
   \\
\hat d &= cz+d.
\end{align*}
In particular, one sees
\begin{align*}
\dd \hat z
  &= \dd \, \bigl( (az+b)(cz+d)^{-1} \bigr)
     \\
  &= \bigl( a-(az+b)(cz+d)^{-1}c \bigr) \dd z (cz+d)^{-1}.
\end{align*}
Using the relation \eqref{e:key_relation}, i.e.,
$\hat \xi=-s \, \hat w$,
one has
\[
 \hat \xi \dd \hat z=(cz+d) \xi \dd z (cz+d)^{-1}-sc \dd z (cz+d)^{-1}.
\]
Taking the trace of the both sides,
one obtains \eqref{e:affine_transf3} in this case;
in particular, the second term of \eqref{e:affine_transf3} is given by
\begin{align*}
 \<\lambda, \dd t_{g;z} t_{g;z}^{-1} \>
   &= -s \operatorname{tr} \bigl(c \dd z (cz+d)^{-1} \bigr) 
     \\
   &= -s \dd \, \log \det (cz+d),
\end{align*}
and hence \eqref{e:pseudo-action} corresponds to the cocycle condition of the automorphy factor.
\end{example}

%
%

\subsection{Global construction}

Recall that the flag variety $\flag$ has the open covering $\{ U_\sigma\}_{\sigma \in W/W_\lambda}$,
and that each $\pi^{-1}(U_\sigma)$ is bi-holomorphic to $U_\sigma \times \C^n$
with $n=\dim (\flag)$.
If a point $x \in \flag$ is in $U_\sigma \cap U_\tau$,
then it can be written
\[
 x= \sigma u_{z_\sigma} .e_Q = \tau u_{z_\tau}. e_Q.
\]
Therefore, 
if we take into account \eqref{e:key_relation}, \eqref{e:xi_by_theta_a} and 
Proposition \ref{p:Gc-equivariance}, 
it is natural from the group-theoretic point of view
to glue together the product bundles $\{ \pi^{-1}(U_\sigma) \}$, 
using the transition functions given by $\{ \lpsi(\tau^{-1} \sigma) \}$,
as follows.

In the disjoint union 
$\bigsqcup_{\sigma} (U_\sigma \times \C^n)$,
let us say that two points
$(z_\sigma,\xi_\sigma) \in U_\sigma \times \C^n$ 
and $(z_\tau,\xi_\tau) \in U_\tau \times \C^n$
are equivalent to each other
if and only if
\begin{equation}
\label{e:gluing_ctgbdle}
 \tau u_{z_\tau}. e_Q = \sigma u_{z_\sigma}. e_Q
   \quad \text{and} \quad 
 \xi_\tau = \lpsi(\tau^{-1} \sigma) \xi_\sigma,
\end{equation}
in which case we write $(z_\sigma,\xi_\sigma) \sim (z_\tau,\xi_\tau)$.
Then we define our twisted cotangent bundle to be 
the quotient space by this equivalence relation:
\begin{equation}
 \twdctgbdl{\flag} 
  := \bigsqcup_{\sigma \in W/W_\lambda} (U_\sigma \times \C^n) \, / \sim.
\end{equation} 
We denote by $[z_\sigma,\xi_\sigma]$ 
the equivalence class of $(z_\sigma,\xi_\sigma) \in U_\sigma \times \C^n$
and by $\varpi$ the projection 
\[
  \twdctgbdl{\flag} \to \flag, 
   \quad 
  [z_\sigma,\xi_\sigma] \mapsto \sigma u_{z_\sigma}. e_Q.
\]
Note that our twisted cotangent bundle is identical to 
the (usual) cotangent bundle $T^{*}(\flag)$ set-theoretically:
\[
 \twdctgbdl{\flag} = \bigcup_{x \in \flag} T^*_x (\flag),
\]
and that a local triviality on $\varpi^{-1}(U_\sigma)$ is given by
\[
 \varpi^{-1}(U_\sigma) 
  \simeq  U_\sigma \times \C^n, 
     \quad
 [z_\sigma,\xi_\sigma] \mapsto (z_\sigma,\xi_\sigma)
\]
for each $\sigma$.
Thus, our twisted cotangent bundle is locally isomorphic 
to the cotangent bundle 
and its transition functions 
are given in terms of the affine transformations $\lpsi$ 
which reduce to the canonical transition functions of the cotangent bundle $\twdctgbdl{\flag}$
when $\lambda=0$.

%
%
\begin{remark}
\label{r:symplectic_form_on_twdctgbdl}
Since the second term of \eqref{e:affine_transf3} is exact,
one obtains that 
$\dd \,( \xi_\sigma \dd z_\sigma) = \dd \,( \xi_\tau \dd z_\tau)$
on $\varpi^{-1}(U_\sigma \cap U_\tau)$.
Therefore, 
our twisted cotangent bundle possesses a holomorphic symplectic form 
that is identical to the canonical one 
on the cotangent bundle $T^{*}(\flag)$,
which we shall denote by $\omega$. 
\end{remark}

%
%
\begin{definition}
For given $g \in \Gc$ and 
$[z_\sigma,\xi_\sigma] \in \varpi^{-1}(U_\sigma) \subset \twdctgbdl{\flag}$,
take any $\tau \in W/W_\lambda$ such that $g.z_\sigma \in U_\tau$.
Suppose that $\xi_\sigma$ is written as
$\xi_\sigma=\< -\lambda,\theta_{u_{z_\sigma} u^-_{w_\sigma}} \>$ 
with $u_{z_\sigma}$ and $u^-_{w_\sigma}$ being the unique elements 
of $U$ and $U^-$ determined by \eqref{e:xi_by_theta_a}.
Then we define a cotangent vector 
$\psi_\lambda(g) \xi_\sigma \in T^*_{g.z_\sigma}(\flag)$ by
\begin{equation}
\label{e:global_action}
 \psi_\lambda(g) \xi_\sigma 
  := \lpsi(\tau^{-1} g \sigma) \<-\lambda, \theta_{u_{z_\sigma} u^-_{w_\sigma}} \>
\end{equation}
and a holomorphic map $\Psi_\lambda(g)$ 
from $\twdctgbdl{\flag}$ to itself by
\begin{equation}
 \Psi_\lambda(g) [z_\sigma,\xi_\sigma] 
  := [g.z_\sigma, \psi_\lambda(g) \xi_\sigma]. 
\end{equation}
\end{definition}

%
%
\begin{lem}
The map $\Psi_\lambda$ is well defined, 
i.e., it is independent of the choices of $\sigma$ and $\tau$ 
in \eqref{e:global_action} above.
Furthermore, we have
\begin{equation}
\Psi_\lambda(g) \Psi_\lambda(h) = \Psi_\lambda(g h)
\end{equation}
for all $g,h \in \Gc$.
Namely, $\Gc$ acts on the twisted cotangent bundle $\twdctgbdl{\flag}$ 
through $\Psi_\lambda$.
\end{lem}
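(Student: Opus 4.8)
The plan is to deduce everything formally from the cocycle identity of Proposition~\ref{p:pseudo-action}, the only genuine point being to verify, at each application, that its two hypotheses (that the intermediate point and the final point both lie in $U_e$) are met. Throughout, for a class $[z_\sigma,\xi_\sigma]$ I write $x:=\sigma u_{z_\sigma}.e_Q$ for its base point, and recall from the second formula of \eqref{e:key_relation} and from \eqref{e:xi_by_theta_a} that $\xi_\sigma$ equals $\<-\lambda,\theta_{u_{z_\sigma}u^{-}_{w_\sigma}}\>$, a cotangent vector at $u_{z_\sigma}.e_Q\in U_e$ (note $u^{-}_{w_\sigma}\in U^{-}\subset Q$ fixes $e_Q$) --- so it lies in $T^{*}U_e$, the domain of the maps $\lpsi$ of Definition~\ref{d:affine_transf}. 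With this, \eqref{e:global_action} reads $\psi_\lambda(g)\xi_\sigma=\lpsi(\tau^{-1}g\sigma)\xi_\sigma$, and $\lpsi(\tau^{-1}g\sigma)$ is admissible here exactly because $\tau^{-1}g\sigma$ carries $u_{z_\sigma}.e_Q$ to $\tau^{-1}g.x$, which lies in $U_e$ by the choice $g.x\in U_\tau$.

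First I would establish independence of the auxiliary chart. If $g.x\in U_\tau\cap U_{\tau'}$, the two candidate outputs share the base point $g.x$, so by the gluing rule \eqref{e:gluing_ctgbdle} they represent the same point of $\twdctgbdl{\flag}$ as soon as $\lpsi((\tau')^{-1}\tau)\,\lpsi(\tau^{-1}g\sigma)\xi_\sigma=\lpsi((\tau')^{-1}g\sigma)\xi_\sigma$; and this is Proposition~\ref{p:pseudo-action} at the base point $u_{z_\sigma}.e_Q$, with $(\tau')^{-1}\tau$ and $\tau^{-1}g\sigma$ in the roles of the elements $g$ and $h$ there, its two hypotheses $\tau^{-1}g.x\in U_e$ and $(\tau')^{-1}g.x\in U_e$ holding because $g.x\in U_\tau\cap U_{\tau'}$. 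Next, independence of the representative: if $(z_\sigma,\xi_\sigma)\sim(z_{\sigma'},\xi_{\sigma'})$, then $\xi_{\sigma'}=\lpsi((\sigma')^{-1}\sigma)\xi_\sigma$ by \eqref{e:gluing_ctgbdle}, so Proposition~\ref{p:pseudo-action}, now with $\tau^{-1}g\sigma'$ and $(\sigma')^{-1}\sigma$ in the roles of $g$ and $h$ --- hypotheses $(\sigma')^{-1}x\in U_e$ (since $x\in U_{\sigma'}$) and $\tau^{-1}g.x\in U_e$, which again hold --- gives $\lpsi(\tau^{-1}g\sigma')\xi_{\sigma'}=\lpsi(\tau^{-1}g\sigma)\xi_\sigma$; as the base points $g.x$ coincide, the two outputs agree. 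Hence $\Psi_\lambda(g)$ is well defined.

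For the homomorphism property fix $[z_\sigma,\xi_\sigma]$ and choose $\tau$ with $h.x\in U_\tau$ and $\rho$ with $gh.x\in U_\rho$. By the foregoing, $\Psi_\lambda(h)[z_\sigma,\xi_\sigma]$ is represented in the chart $U_\tau$ with fibre $\lpsi(\tau^{-1}h\sigma)\xi_\sigma$, a cotangent vector over $h.x\in U_\tau$, to which Lemma~\ref{l:key_relation} --- and hence the definition of $\psi_\lambda$ --- applies; so
\[
\Psi_\lambda(g)\Psi_\lambda(h)[z_\sigma,\xi_\sigma]=\bigl[\,gh.x,\ \lpsi(\rho^{-1}g\tau)\lpsi(\tau^{-1}h\sigma)\xi_\sigma\,\bigr],\qquad\Psi_\lambda(gh)[z_\sigma,\xi_\sigma]=\bigl[\,gh.x,\ \lpsi(\rho^{-1}gh\sigma)\xi_\sigma\,\bigr],
\]
where the second is computed with the same $\rho$, which is legitimate by the independence just proved. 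These coincide by Proposition~\ref{p:pseudo-action} at the base point $u_{z_\sigma}.e_Q$, with $\rho^{-1}g\tau$ and $\tau^{-1}h\sigma$ in the roles of $g$ and $h$ and hypotheses $\tau^{-1}h.x\in U_e$ and $\rho^{-1}gh.x\in U_e$. Finally $\Psi_\lambda(e)=\operatorname{id}$ (take $\tau=\sigma$ and use $\lpsi(e)=\operatorname{id}$, immediate from Definition~\ref{d:affine_transf}), so each $\Psi_\lambda(g)$ is bijective with inverse $\Psi_\lambda(g^{-1})$, and $g\mapsto\Psi_\lambda(g)$ is an action of $\Gc$ on $\twdctgbdl{\flag}$.

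The part I expect to demand the most care is not the algebra above --- which is purely formal once Proposition~\ref{p:pseudo-action} is in hand --- but the bookkeeping beneath it: keeping straight the identification of a covector over $z_\sigma\in U_\sigma$ with one over $u_{z_\sigma}.e_Q\in U_e$ (so that the maps $\lpsi$, defined in Definition~\ref{d:affine_transf} only over $U_e$, may legitimately act on $\xi_\sigma$), and checking, at each invocation of Proposition~\ref{p:pseudo-action}, that both of its hypotheses (that the relevant base points lie in $U_e$) are satisfied. Nothing beyond \eqref{e:global_action}, \eqref{e:gluing_ctgbdle}, Lemma~\ref{l:key_relation} and Proposition~\ref{p:pseudo-action} is needed.
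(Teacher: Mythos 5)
Your proof is correct and follows essentially the same route as the paper: both reduce the well-definedness (in $\sigma$ and $\tau$) and the homomorphism property entirely to the cocycle identity of Proposition~\ref{p:pseudo-action} combined with the gluing relation \eqref{e:gluing_ctgbdle}. You are somewhat more explicit than the paper in checking at each application that the hypotheses of Proposition~\ref{p:pseudo-action} (the relevant base points lying in $U_e$) are satisfied, and in writing out the homomorphism step that the paper merely asserts, but the underlying argument is identical.
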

\begin{proof}
Suppose that 
$[z_\sigma,\xi_\sigma]=[z_{\hat \sigma},\xi_{\hat \sigma}]$
and take another $\hat \tau$ such that $g.z_{\hat \sigma} \in U_{\hat \tau}$.
Then, by definition, 
one has
\[
 \xi_{\hat \sigma} =\lpsi ({\hat \sigma}^{-1} \sigma) \xi_\sigma.
\]

It suffices to show that
\[
 \lpsi( \hat \tau^{-1} g \hat \sigma ) \xi_{\hat \sigma}
  =\lpsi( {\hat \tau}^{-1} \tau) \lpsi( \tau^{-1} g \sigma) \xi_\sigma.
\]
Now one sees
\begin{align*}
 \lpsi( \hat \tau^{-1} g \hat \sigma ) \xi_{\hat \sigma} 
  &= \lpsi({\hat \tau}^{-1} g \hat \sigma) \lpsi( {\hat \sigma}^{-1} \sigma) \xi_\sigma 
        \\
  &= \lpsi({\hat \tau}^{-1} g \sigma) \xi_\sigma
        \\
  &= \lpsi({\hat \tau}^{-1} \tau) \lpsi(\tau^{-1} g \sigma) \xi_\sigma
\end{align*}
by Proposition \ref{p:pseudo-action}.
The second assertion also follows from Proposition \ref{p:pseudo-action}.
\end{proof}

Since $\varpi^{-1}(U_\sigma)=\pi^{-1}(U_\sigma)$,
one can define $\mu_{\lambda;\sigma} : \varpi^{-1}(U_\sigma) \to \Omega_\lambda$
by the same formula as \eqref{e:definition_of_local_mu} 
for each $\sigma \in W/W_\lambda$:
\begin{equation}
\label{e:local_def_of_mu}
 \mu_{\lambda;\sigma}: \varpi^{-1}(U_\sigma) \to \Omega_\lambda,
     \quad 
 [z_\sigma,\xi_\sigma] \mapsto \Ad^*(\sigma u_{z_\sigma} u^-_{w_\sigma}) \lambda,
\end{equation}
where $u_{z_\sigma} \in U$ and $u^-_{w_\sigma} \in U^-$ are determined 
by $(z_\sigma,\xi_\sigma)$ as in \eqref{e:key_relation}.

%
%
%
\begin{prop}
\label{p:compatibility_of_local_mu}
The local isomorphisms $\{ \mu_{\lambda;\sigma} \}_{\sigma \in W/W_\lambda}$ 
satisfy the compatibility condition
\begin{equation}
 \mu_{\lambda;\sigma}|_{\varpi^{-1}(U_\sigma \cap U_\tau)}
=\mu_{\lambda;\tau}|_{\varpi^{-1}(U_\sigma \cap U_\tau)}
     \quad
   (\sigma, \tau \in W/W_\lambda).
\end{equation} 
Thus we can define a globally defined bi-holomorphic map
\begin{equation}
\label{e:definition_of_global_mu}
 \mu_\lambda : \twdctgbdl{\flag} \to \Omega_\lambda 
     \quad \text{by} \quad
  \mu_\lambda|_{\varpi^{-1}(U_\sigma)} := \mu_{\lambda;\sigma}.
\end{equation}
Furthermore this map is $\Gc$-equivariant, 
i,e, we have
\begin{equation}
 \mu_\lambda \circ \Psi_\lambda(g) = \Ad^*(g) \circ \mu_\lambda
\end{equation}
for all $g \in \Gc$. 
\end{prop}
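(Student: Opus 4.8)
The plan is to deduce all three assertions --- the compatibility condition, the bi-holomorphy of $\mu_\lambda$, and its $\Gc$-equivariance --- from Proposition~\ref{p:Gc-equivariance}, the equivariance of the single map $\mu_{\lambda;e}$. The bridge between an arbitrary chart and the $e$-chart is the elementary identity
\begin{equation}
 \mu_{\lambda;\sigma}[z_\sigma,\xi_\sigma]=\Ad^{*}(\sigma)\,\mu_{\lambda;e}(z_\sigma,\xi_\sigma),
 \label{e:plan_reduction}
\end{equation}
where the right-hand side reads the pair $(z_\sigma,\xi_\sigma)$ as a point of $T^{*}U_e$. I would justify \eqref{e:plan_reduction} by noting that the elements $u_{z_\sigma}\in U$ and $u^{-}_{w_\sigma}\in U^{-}$ associated with $(z_\sigma,\xi_\sigma)$ are prescribed purely by the coordinate pair through \eqref{e:key_relation}, whose second equation --- the one pinning down the fibre variable --- is literally the same relation in every chart; hence \eqref{e:local_def_of_mu} factors as $\Ad^{*}(\sigma u_{z_\sigma}u^{-}_{w_\sigma})\lambda=\Ad^{*}(\sigma)\bigl(\Ad^{*}(u_{z_\sigma}u^{-}_{w_\sigma})\lambda\bigr)$, and the inner factor is $\mu_{\lambda;e}(z_\sigma,\xi_\sigma)$.

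For the compatibility condition I would take a point of $\varpi^{-1}(U_\sigma\cap U_\tau)$ with chart presentations $(z_\sigma,\xi_\sigma)$ and $(z_\tau,\xi_\tau)$, set $h:=\tau^{-1}\sigma$, and observe that $h$ satisfies the hypothesis of Definition~\ref{d:affine_transf} because the base point lies in $U_\tau$; moreover the gluing relation \eqref{e:gluing_ctgbdle} says exactly that $(z_\tau,\xi_\tau)=\lPsi(h)(z_\sigma,\xi_\sigma)$ inside $T^{*}U_e$. Feeding this into Proposition~\ref{p:Gc-equivariance} and applying \eqref{e:plan_reduction} at both ends gives
\begin{align*}
 \mu_{\lambda;\tau}[z_\tau,\xi_\tau]
   &=\Ad^{*}(\tau)\,\mu_{\lambda;e}\bigl(\lPsi(h)(z_\sigma,\xi_\sigma)\bigr)
     =\Ad^{*}(\tau)\Ad^{*}(h)\,\mu_{\lambda;e}(z_\sigma,\xi_\sigma)\\
   &=\Ad^{*}(\sigma)\,\mu_{\lambda;e}(z_\sigma,\xi_\sigma)
     =\mu_{\lambda;\sigma}[z_\sigma,\xi_\sigma],
\end{align*}
so $\mu_\lambda$ is well defined on $\twdctgbdl{\flag}$.

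Next I would argue that $\mu_\lambda$ is bi-holomorphic. Each $\mu_{\lambda;\sigma}$ is an injective holomorphic map (Definition~\ref{d:local_twisted_mmap}) whose image is the open set $p_\lambda(p^{-1}(U_\sigma))$ --- surjectivity onto this set follows from the factorization of Remark~\ref{r:key_remark} together with Lemma~\ref{l:key_relation} --- and these images cover $\Omega_\lambda$ because $\bigcup_\sigma p^{-1}(U_\sigma)=\Gc$. Thus $\mu_\lambda$ is a local bi-holomorphism onto $\Omega_\lambda$ and, in particular, surjective. For injectivity I would use that, since $U^{-}\subset Q$, the composite of $\mu_\lambda$ with the natural projection $\Omega_\lambda\simeq\Gc/L\twoheadrightarrow\flag$ is the bundle projection $\varpi$; hence two points with the same $\mu_\lambda$-image have the same base point $x$, necessarily lying in $U_\sigma\cap U_\tau$ for the two relevant charts, and then injectivity of $\mu_{\lambda;\sigma}$ combined with the compatibility condition just proved forces the two points to coincide. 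A bijective local bi-holomorphism is bi-holomorphic.

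Finally, for the $\Gc$-equivariance I would take $p=[z_\sigma,\xi_\sigma]\in\varpi^{-1}(U_\sigma)$ and $g\in\Gc$, choose $\tau$ as in the definition of $\Psi_\lambda$, and note --- unwinding \eqref{e:global_action} --- that $\Psi_\lambda(g)p=[z_\tau,\xi_\tau]$ with $(z_\tau,\xi_\tau)=\lPsi(\tau^{-1}g\sigma)(z_\sigma,\xi_\sigma)$ in $T^{*}U_e$. Then \eqref{e:plan_reduction} and Proposition~\ref{p:Gc-equivariance} yield
\begin{align*}
 \mu_\lambda\bigl(\Psi_\lambda(g)p\bigr)
  &=\Ad^{*}(\tau)\,\mu_{\lambda;e}\bigl(\lPsi(\tau^{-1}g\sigma)(z_\sigma,\xi_\sigma)\bigr)
   =\Ad^{*}(\tau)\Ad^{*}(\tau^{-1}g\sigma)\,\mu_{\lambda;e}(z_\sigma,\xi_\sigma)\\
  &=\Ad^{*}(g)\Ad^{*}(\sigma)\,\mu_{\lambda;e}(z_\sigma,\xi_\sigma)
   =\Ad^{*}(g)\,\mu_\lambda(p).
\end{align*}
I expect the only real friction to be the chart bookkeeping that makes \eqref{e:plan_reduction} and the identifications $(z_\tau,\xi_\tau)=\lPsi(\,\cdot\,)(z_\sigma,\xi_\sigma)$ legitimate; once these are pinned down, the statement is a short formal consequence of the equivariance of $\mu_{\lambda;e}$ and the cocycle identity of Proposition~\ref{p:pseudo-action}.
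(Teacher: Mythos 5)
Your proof is correct and follows essentially the same route as the paper: both rest on the unique $UU^{-}L$ factorization and the fact that $\Ad^{*}(t)\lambda=\lambda$ for $t\in L$; you merely package the computation through the reduction $\mu_{\lambda;\sigma}=\Ad^{*}(\sigma)\circ\mu_{\lambda;e}$ and a systematic appeal to Proposition~\ref{p:Gc-equivariance}, whereas the paper redoes the decomposition explicitly in each chart. The one place you go beyond the paper is the explicit verification that $\mu_\lambda$ is bijective (surjectivity from $\bigcup_\sigma p^{-1}(U_\sigma)=\Gc$ together with Remark~\ref{r:key_remark}, and global injectivity via the projection $\Omega_\lambda\to\flag$), a point the paper asserts without argument; that addition is sound.
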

\begin{proof}
Suppose that a point of $\varpi^{-1}(U_\sigma \cap U_\tau)$ is expressed in two ways:
\[
 [z_\sigma,\xi_\sigma] = [z_\tau,\xi_\tau] \in \varpi^{-1}(U_\sigma \cap U_\tau),
\]
where we regard $[z_\sigma,\xi_\sigma] \in \varpi^{-1}(U_\sigma)$ 
and $[z_\tau,\xi_\tau] \in \varpi^{-1}(U_\tau)$. 
Let $u_{z_\sigma}, u_{z_\tau} \in U$
and $u^-_{w_\sigma}, u^-_{w_\tau} \in U^-$ satisfy
\[
 \xi_\sigma=\<-\lambda,\theta_{u_\sigma u^-_\sigma}\>
   \quad \text{and} \quad
 \xi_\tau=\<-\lambda,\theta_{u_\tau u^-_\tau}\>
\]
as in \eqref{e:xi_by_theta_a}
(we shall abbreviate $u_\sigma:=u_{z_\sigma}, u^-_\sigma:=u^-_{w_\sigma}$ 
etc.~ until the end of the proof).
Then by the definition of the equivalence relation \eqref{e:gluing_ctgbdle},
we have
\[
 \sigma^{-1} \tau u_\tau u^-_\tau = u_\sigma u^-_\sigma t
\]
for some $t \in L=G(\lambda)$.
Therefore, we see that
\begin{align*}
 \mu_{\lambda;\tau}([z_\tau,\xi_\tau])
  &=\Ad^*(\tau u_\tau u^-_\tau) \lambda 
   =\Ad^*(\sigma u_\sigma u^-_\sigma t) \lambda
     \\
  &=\Ad^*(\sigma u_\sigma u^-_\sigma) \lambda
     \\
  &=\mu_{\lambda;\sigma}([z_\sigma,\xi_\sigma]).
\end{align*}

Next, 
for $g \in \Gc$ and $[z_\sigma,\xi_\sigma] \in \varpi^{-1}(U_\sigma)$,
take $\hat \sigma \in W/W_\lambda$ satisfying $g.z_\sigma \in U_{\hat \sigma}$.
Since ${\hat \sigma}^{-1} g \sigma u_\sigma u^-_\sigma$ is in $U U^- L$,
it decomposes, say,
\[
 {\hat \sigma}^{-1} g \sigma u_\sigma u^-_\sigma 
  = u_1 u^-_1 t_1 
     \quad \text{with} \quad
  u_1 \in U, u^-_1 \in U^-, t_1 \in L.
\]
Then 
\begin{align*}
 \psi_\lambda(g) \xi_\sigma 
  &= \lpsi({\hat \sigma}^{-1} g \sigma) \xi_\sigma
     \\
  &= \lpsi({\hat \sigma}^{-1} g \sigma) \<-\lambda,\theta_{u_\sigma u^-_\sigma}\> 
     \\
  &= \<-\lambda,\theta_{{\hat \sigma}^{-1} g \sigma u_\sigma u^-_\sigma t_1^{-1}} \>
\end{align*}
Therefore, we see that
\begin{align*}
  \mu_\lambda(\Psi_\lambda(g) [z_\sigma,\xi_\sigma])
 &=\mu_{\lambda;\hat \sigma} ([g.z_\sigma, \psi_\lambda(g)\xi_\sigma])
      \\
 &=\Ad^*(\hat \sigma ) 
     \Ad^*({\hat \sigma}^{-1} g \sigma u_\sigma u^-_\sigma t_1^{-1}) \lambda
      \\
 &=\Ad^*(g) \mu_{\lambda}([z_\sigma,\xi_\sigma]).
\end{align*}
This completes the proof.
\end{proof}

%
%
\begin{example}
\label{ex:sl2}
Let us consider the case where $p=q=1$ in Example \ref{ex:su(p,q)}, 
i.e., $\Gc=\SL{2}(\C)$, 
$Q=\left\{ 
     \left[
       \begin{smallmatrix}
         a & 0 \\ c & a^{-1}
       \end{smallmatrix}
      \right] \in \Gc 
    \right\}$,
the Borel subgroup of \Gc,
and 
\[
\lambda:Q \to \C^{\times},
   \quad
     \left[
       \begin{matrix}
         a & 0 \\ c & a^{-1}
       \end{matrix}
      \right] 
  \mapsto a^s.
\]
The flag variety $\Gc/Q$ is identified with the complex projective line
$\mathbb{CP}^1$.
Under this identification,
the open covering $\{ U_e,U_\sigma\}$
(with 
\( \sigma=\left[ \begin{smallmatrix} 0 & 1 \\ -1 & 0 \end{smallmatrix} \right] \)
) 
is given by
\begin{equation*}
 U_e
  =\left\{ (z:1) \in \mathbb{CP}^1 ; z \in \C \right\} \simeq \C,
   \quad
 U_\sigma
  =\left\{ (1:z_\sigma) \in \mathbb{CP}^1 ; z_\sigma \in \C \right\} \simeq \C.
\end{equation*}
For $[z,\xi] \in \varpi^{-1}(U_e)$ 
and $[z_\sigma,\xi_\sigma] \in \varpi^{-1}(U_\sigma)$,
let $u_{z}, u_{z_\sigma} \in U$ and $u^-_{w}, u^-_{w_\sigma} \in U^-$ 
satisfy $\xi=-s w, \; \xi_\sigma=-s w_\sigma$ as in \eqref{e:key_relation}.
If $[z_\sigma,\xi_\sigma]=[z,\xi]$, 
then one sees
\begin{equation}
\label{e:sl2_case}
 z_\sigma= -\frac1{z}, 
   \quad 
 w_\sigma=z^2 w + z
\end{equation} 
since $ \xi_\sigma =\lpsi(\sigma^{-1}) \xi =z^2 \xi -s z$.

Denoting the maps $\mu_{\lambda;e}$ and $\mu_{\lambda;\sigma}$ 
followed by the isomorphism $\g^* \simeq \g$ 
via the trace form by the same notations,
one obtains
\begin{equation}
\label{e:mu_z_xi}
 \mu_{\lambda;e}([z,\xi])
  = \frac{s}2
     \begin{bmatrix} 
      1+2z w & -2 z (1+z w) \\ 
       2 w     & -(1+2 z w) 
     \end{bmatrix}
\end{equation}
and
\[
 \mu_{\lambda;\sigma}([z_\sigma,\xi_\sigma])
  = \frac{s}2
     \begin{bmatrix} 
      -(1+2z_\sigma w_\sigma) & -2 w_\sigma \\ 
      2 z_\sigma (1+ z_\sigma w_\sigma) & 1+2 z_\sigma w_\sigma
     \end{bmatrix},
\]
which, under the relation \eqref{e:sl2_case}, coincide with each other.
\end{example}

\subsection{Symplectomorphism}

%
%
We next prove that 
the map $\mu_{\lambda}:\twdctgbdl{\flag} \to \Omega_{\lambda}$ is symplectic.
Let $\omega$ and $\omega_{\lambda}$ denote 
the canonical $\Gc$-invariant holomorphic symplectic forms 
on $\twdctgbdl{\flag}$ and $\Omega_{\lambda}$ respectively. 
Recall that $\omega$ is defined by 
\begin{equation}
 \omega_{[z_\sigma,\xi_\sigma]} 
   = - \dd \, (\xi_{\sigma} \dd z_{\sigma})
   =\sum_{\alpha \in \Delta(\u)} \dd {z_{\sigma}}^{\alpha} \wedge \dd \xi_{\sigma \alpha}     
\end{equation}
if $[z_\sigma,\xi_\sigma] \in \varpi^{-1}(U_{\sigma}) \subset \twdctgbdl{\flag}$
(cf.~ Remark \ref{r:symplectic_form_on_twdctgbdl}),
and that $\omega_{\lambda}$ is defined by
\begin{equation}
\label{e:definition of omega_lambda}
 (\omega_{\lambda})_f( X_{\Omega_{\lambda}}, Y_{\Omega_{\lambda}})
    = - \<f, [X,Y] \>
     \quad (f \in \Omega_{\lambda}; X,Y \in \g),
\end{equation}
where $X_{\Omega_{\lambda}}, Y_{\Omega_{\lambda}}$ 
are the vector fields on $\Omega_{\lambda}$
generated by $X,Y \in \g$ respectively
that are defined by \eqref{e:def_vec_field}.

%
%
\begin{prop}
Let $\omega$ and $\omega_{\lambda}$ be the canonical symplectic forms 
on $\twdctgbdl{\flag}$ and $\Omega_{\lambda}$ respectively.
Then $\mu_{\lambda}$ preserves the symplectic forms:
\begin{equation}
\label{e:mu_lambda_is_symplectic}
\mu_{\lambda}^{*} \omega_{\lambda} = \omega. 
\end{equation}
\end{prop}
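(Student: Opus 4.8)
The plan is to reduce \eqref{e:mu_lambda_is_symplectic} to a chart-wise statement and then compute. Since $\omega$ is defined chart by chart by $\omega|_{\varpi^{-1}(U_\sigma)}=-\dd(\xi_\sigma\,\dd z_\sigma)$, and since $\mu_\lambda|_{\varpi^{-1}(U_\sigma)}=\mu_{\lambda;\sigma}$ by Proposition \ref{p:compatibility_of_local_mu}, it suffices to prove, for each $\sigma\in W/W_\lambda$,
\[
 \mu_{\lambda;\sigma}^{*}\omega_\lambda=-\dd(\xi_\sigma\,\dd z_\sigma)
   \quad\text{on}\quad \varpi^{-1}(U_\sigma).
\]
The same argument will work uniformly for every $\sigma$, so no further reduction is needed.

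The main ingredient is the classical Maurer--Cartan description of the Kirillov--Kostant--Souriau form: with $\theta_g=g^{-1}\dd g$ the $\g$-valued $1$-form of \eqref{e:maurer-cartan} and $p_\lambda:\Gc\to\Omega_\lambda$ the orbit map, I claim
\[
 p_\lambda^{*}\omega_\lambda=\dd\langle\lambda,\theta\rangle .
\]
I would verify this by evaluating both sides on left-invariant vector fields $X^{L},Y^{L}$ (for $X,Y\in\g$). Using $\dd\eta(V,W)=V\eta(W)-W\eta(V)-\eta([V,W])$ with $\eta=\langle\lambda,\theta\rangle$, and the facts that $\eta(X^{L})=\langle\lambda,X\rangle$ is a constant function on $\Gc$ and $[X^{L},Y^{L}]=[X,Y]^{L}$, one gets $\dd\langle\lambda,\theta\rangle(X^{L},Y^{L})=-\langle\lambda,[X,Y]\rangle$. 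On the other hand, differentiating $p_\lambda(g\exp tX)=\Ad^{*}(g\exp tX)\lambda$ and keeping track of the minus sign built into the definition \eqref{e:def_vec_field} of the fundamental vector fields, one finds $(p_\lambda)_{*}X^{L}_g=-(\Ad(g)X)_{\Omega_\lambda}$ at $f=\Ad^{*}(g)\lambda$, whence $(p_\lambda^{*}\omega_\lambda)(X^{L},Y^{L})=-\langle f,[\Ad(g)X,\Ad(g)Y]\rangle=-\langle\lambda,[X,Y]\rangle$ as well. Since left-invariant fields span every tangent space, the identity follows. (Modulo these sign conventions the identity is also available from \cite{BGV92}.)

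Granting this, the computation is immediate. By Remark \ref{r:expression_of_covector} the assignment $[z_\sigma,\xi_\sigma]\mapsto \sigma u_{z_\sigma}u^{-}_{w_\sigma}$ is a holomorphic section $\varpi^{-1}(U_\sigma)\to p^{-1}(U_\sigma)$ through which $\mu_{\lambda;\sigma}=p_\lambda\circ(\sigma u_{z_\sigma}u^{-}_{w_\sigma})$ by \eqref{e:local_def_of_mu}. Pulling back, and using that exterior differentiation commutes with pullback,
\[
 \mu_{\lambda;\sigma}^{*}\omega_\lambda
   =\dd\langle\lambda,\theta_{\sigma u_{z_\sigma}u^{-}_{w_\sigma}}\rangle
\]
in the notation of that remark (where $\theta_{\sigma a}$ denotes the pull-back of $\theta$ by the section). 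But \eqref{e:xi_by_theta_a} says precisely $\langle\lambda,\theta_{\sigma u_{z_\sigma}u^{-}_{w_\sigma}}\rangle=-\xi_\sigma\,\dd z_\sigma$, so the right-hand side equals $-\dd(\xi_\sigma\,\dd z_\sigma)=\omega$ on $\varpi^{-1}(U_\sigma)$. Patching over $\sigma\in W/W_\lambda$ gives \eqref{e:mu_lambda_is_symplectic}.

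The one genuinely delicate step is pinning down the signs in $p_\lambda^{*}\omega_\lambda=\dd\langle\lambda,\theta\rangle$, precisely because the paper builds a sign into \eqref{e:def_vec_field} and hence into $\omega_\lambda$, and one must also fix the normalization of the orbit map; once that is settled the rest is formal. As a consistency check one may observe that it would in fact suffice to verify \eqref{e:mu_lambda_is_symplectic} at a single point of $\twdctgbdl{\flag}$: the $\Gc$-equivariance of $\mu_\lambda$ (Proposition \ref{p:compatibility_of_local_mu}) together with the $\Ad^{*}$-invariance of $\omega_\lambda$ and the $\Psi_\lambda$-invariance of $\omega$ forces both $\mu_\lambda^{*}\omega_\lambda$ and $\omega$ to be $\Psi_\lambda$-invariant, and $\twdctgbdl{\flag}$ is a single $\Gc$-orbit because $\mu_\lambda$ is an equivariant bijection onto the coadjoint orbit $\Omega_\lambda$.
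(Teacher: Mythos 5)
Your argument is correct and is essentially the paper's own proof: both hinge on the identity $\xi_\sigma\,\dd z_\sigma=-\langle\lambda,\theta\rangle$ pulled back by the section $[z_\sigma,\xi_\sigma]\mapsto \sigma u_{z_\sigma}u^{-}_{w_\sigma}$, together with the Maurer--Cartan expression of the Kirillov--Kostant--Souriau form. The only cosmetic difference is that you establish $p_\lambda^{*}\omega_\lambda=\dd\langle\lambda,\theta\rangle$ on the group via left-invariant vector fields, whereas the paper computes the equivalent fact $\theta_{g.\lambda}(X_{\Omega_\lambda})=-\Ad(g^{-1})X$ on the orbit (and restricts attention to the dense chart $\varpi^{-1}(U_e)$ rather than arguing uniformly over all $\sigma$).
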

\begin{proof}
It suffices to show the equality \eqref{e:mu_lambda_is_symplectic} 
on the dense open subset $\varpi^{-1}(U_{e})$.
For $[z,\xi] \in \varpi^{-1}(U_{e})$,
we put $g:=u_z u^{-}_w $,
where $u_z \in U$ and $u^{-}_w \in U^{-}$ are determined by \eqref{e:key_relation}.
Then it follows from \eqref{e:xi_by_theta_a} that
\begin{equation}
 \omega = \< \lambda, \dd \theta_g \> 
        = \< -\lambda, \theta_g \wedge \theta_g \>
\end{equation}
since 
\(
\dd \theta_g=\dd \, ( g^{-1} \dd g)=-g^{-1} \dd g \wedge g^{-1} \dd g%
 =- \theta_g \wedge \theta_g.
\)
Thus, if we can show that
\begin{equation}
\label{e:Omega_by_theta}
 (\omega_{\lambda})_{g.\lambda}(X_{\Omega_\lambda},Y_{\Omega_\lambda})
 = \< -\lambda, %
      (\theta_{g.\lambda} \wedge \theta_{g.\lambda})(X_{\Omega_\lambda}, Y_{\Omega_\lambda}) 
   \>
   \notag
\end{equation}
for all $X,Y \in \g$,
then we are done.
Here, we set $g.\lambda=\Ad^{*}(g)\lambda$ for brevity, 
and denote by $\theta_{g.\lambda}$ the pull-back of the 1-form $\theta$ by the local section
\[
 g: \Omega_\lambda|_{\, \mu_{\lambda}(\varpi^{-1}(U_{e}))} \to \Gc,
   \quad
  g.\lambda \mapsto g
\]
with $g=u_z u^-_w$.

Now, for $X \in \g$,
we see that
\begin{align*}
 X_{\Omega_\lambda}({g.\lambda}) 
 & = \left. \frac{\dd}{\dd t} \right|_{t=0} \exp(-t X). (g.\lambda)
        \\
 & = \left. \frac{\dd}{\dd t} \right|_{t=0} g \exp(-t \Ad(g^{-1})X). \lambda
        \\
 & = g_{*} (\Ad(g^{-1})X)_{\Omega_\lambda} ({\lambda}).
\end{align*}
Hence we obtain that
\begin{align*}
 \theta_{g.\lambda}(X_{\Omega_\lambda})
  & = \theta_{g.\lambda}( g_* (\Ad(g^{-1})X)_{\Omega_\lambda} )
    = (g^{*} \theta)_{\lambda} ( (\Ad(g^{-1})X)_{\Omega_\lambda} )
        \\
  & = \theta_{\lambda} ( (\Ad( g^{-1}) X )_{\Omega_\lambda} )
        \\
  & = -\Ad(g^{-1}) X
\end{align*}
since $g^{*}\theta=\theta$ 
and $\theta_{\lambda}(Z_{\Omega_\lambda}) = -Z$ 
for $Z \in \g$.
Therefore, we have
\begin{align*}
 \< -\lambda,%
     (\theta_{g.\lambda} \wedge \theta_{g.\lambda})(X_{\Omega_\lambda}, Y_{\Omega_\lambda}) \>
 &= \< -\lambda,%
         [ \theta_{g.\lambda}(X_{\Omega_\lambda}), \theta_{g.\lambda}(Y_{\Omega_\lambda}) ] \>
     \\
 &= \< -\lambda, [\Ad(g^{-1})X, \Ad(g^{-1})Y] \>
     \\
 &= \< -\Ad^{*}(g)\lambda, [X,Y] \>,
\end{align*}
which equals 
$(\omega_{\lambda})_{g.\lambda}(X_{\Omega_\lambda},Y_{\Omega_\lambda})$ 
by definition.
\end{proof}

%
%
\begin{example}
We let $\Gc=\SL{2}(\C),Q$ and $\lambda \in \g^*$ be as in Example \ref{ex:sl2},
and still identify $\g^*$ with $\g$ by the trace form. 
If we parametrize an element $f \in \Omega_\lambda \subset \g$ as
\( f=\left[ \begin{smallmatrix} a & b \\ c & -a \end{smallmatrix} \right]\)
with $a,b$ and $c \in \C$,
then it is easy to show that the canonical symplectic form 
$\omega_\lambda$ on $\Omega_\lambda$ is given by
\[
 \omega_\lambda
   = \frac{2}{s^2}\left( a \dd b \wedge \dd c - b \dd a \wedge \dd c + c \dd a \wedge \dd b
          \right).
\]
If $f=\Ad(g) \lambda^\vee$ 
with $g=u_z u^-_{w} t \in p^{-1}(U_e) = U U^- L$
and if we write $u_z$ and $u^-_w$ as
\[
 u_z=\begin{bmatrix} 1 & \, z \\ 0 & 1  \end{bmatrix}, \;
 u^-_w=\begin{bmatrix} 1 & 0 \\ \, w & 1  \end{bmatrix}
     \quad (z,w \in \C),
\]
then we find that
\begin{equation*}
 \omega_\lambda 
   = -s \dd z \wedge \dd w
   = \< -\lambda, \theta_g \wedge \theta_g \>.
\end{equation*}

Now, for $[z,\xi] \in \varpi^{-1}(U_e)$, 
if $f=\mu_\lambda([z,\xi])=\Ad(u_z u^-_w) \lambda^\vee$, 
i.e., $\xi$ and $w$ are related by $\xi= -s w$ as in Lemma \ref{l:key_relation} 
or \eqref{e:prototype_key_relation},
then it is immediate to see that $\mu_\lambda$ preserves the symplectic forms.
\end{example}

%
%
Summarising the results, we have seen that the following holds (cf.~\cite{CG97, Graham_Vogan97}):
\begin{thm}
\label{t:equivariance}
The holomorphic map $\mu_{\lambda}: \twdctgbdl{\flag} \to \Omega_{\lambda}$ 
given by \eqref{e:local_def_of_mu} and \eqref{e:definition_of_global_mu}
is a \Gc-equivariant symplectic isomorphism.
\end{thm}

%
%
Furthermore, 
the map $\mu_\lambda$ provides a moment map $\twdctgbdl{\flag} \to \g^*$ with respect to the symplectic form $\omega$.
Namely, we have the following.
\begin{cor}
The action $\Psi_\lambda$ of $\Gc$ on the symplectic manifold $(\twdctgbdl{\flag},\omega)$ 
is Hamiltonian with moment map $\mu_\lambda$.
\end{cor}
\begin{proof}
Setting $M:=\twdctgbdl{\flag}$ for brevity, we must show that
\begin{equation*}
 \dd\, \< \mu_\lambda, X \> =\iota_{X_M} \omega
\end{equation*}
for $X \in \g$ with $\iota$ denoting the contraction.
Since $\{ Y_M; Y \in \g \}$ spans the holomorphic tangent space $T_\alpha M$,
it suffices to show that
\begin{equation}
\label{e:mu_lambda is a moment map}
 \dd\, \< \mu_\lambda(\alpha),X \> (Y_M) = \left( \iota_{X_M} \omega \right)_\alpha (Y_M) 
\end{equation}
for $X, Y \in \g$,
where we simply denote a point in $M$ by $\alpha$.

Now, since $\mu_\lambda$ is $\Gc$-equivariant, one has
\begin{align*}
\dd\, \< \mu_\lambda(\alpha),X \> (Y_M) 
  &= \left. \frac{\dd}{\dd t} \right|_{t=0} \< \mu_\lambda( \Psi_\lambda(\exp(-t Y)) \alpha ), X \>
   = \left. \frac{\dd}{\dd t} \right|_{t=0} \< \Ad^*(\exp(-t Y)) \mu_\lambda(\alpha), X \>
     \\
  &= \left. \frac{\dd}{\dd t} \right|_{t=0} \< \mu_\lambda(\alpha), \Ad(\exp t Y) X \>
   = \< \mu_\lambda(\alpha), [Y,X] \>
     \\
  &= - \< \mu_\lambda(\alpha), [X,Y] \>.
\end{align*} 
On the other hand, 
since $\mu_\lambda$ is symplectic, the right-hand side of \eqref{e:mu_lambda is a moment map} equals
\begin{equation*}
 \omega_\alpha (X_M, Y_M)
  = \left( \mu_\lambda^* \omega_\lambda \right)_\alpha (X_M, Y_M)
  = ({\omega_\lambda})_{\mu_\lambda(\alpha)} ( {\mu_\lambda}_* X_M, {\mu_\lambda}_* Y_M).
\end{equation*}
Here, using the \Gc-equivariance of $\mu_\lambda$ again, one has
\begin{align*}
  ({{\mu_\lambda}_*} X_M)({\mu_\lambda(\alpha)})
 &= \left. \frac{\dd}{\dd t} \right|_{t=0} \mu_\lambda( \Psi_\lambda(\exp(-t X)) \alpha)
  = \left. \frac{\dd}{\dd t} \right|_{t=0}  \Ad^*(\exp(-t X))\mu_\lambda( \alpha)
     \\
 &= X_{\Omega_\lambda} (\mu_\lambda(\alpha)),
\end{align*}
and hence
\[
 ({\omega_\lambda})_{\mu_\lambda(\alpha)} ( {\mu_\lambda}_* X_M, {\mu_\lambda}_* Y_M)
 = ({\omega_\lambda})_{\mu_\lambda(\alpha)} (X_{\Omega_\lambda}, Y_{\Omega_\lambda}).
\]
This implies the equation \eqref{e:mu_lambda is a moment map} 
by the definition \eqref{e:definition of omega_lambda} of $\omega_\lambda$.
\end{proof}

As an application,
one can obtain an explicit embedding of \Gr-orbit of $\lambda$ into the twisted cotangent bundle,
where $\Gr$ is a noncompact Hermitian Lie group.
In fact,
let $(\Gr,\Kr)$ be a Hermitian symmetric pair of noncompact type
whose complexifications are equal to $\Gc$ and $L$ respectively in the above.
Then if $\lambda$ satisfies the condition for which we referred to \cite{Knapp86} in Section 2, 
the holomorphic discrete series representation $(\pi_\lambda, \mathcal H_\lambda)$
possesses a unique element (highest weight vector) $\varphi_\lambda$ satisfying 
\begin{align*}
 \pi_\lambda(t) \varphi_\lambda &= \lambda(t) \varphi_\lambda 
   \quad (t \in T_{\R});
     \\
 \pi_\lambda(X) \varphi_\lambda &= 0 
   \qquad \quad (X \in \u); 
     \\
 \varphi_\lambda(e) &= 1,
\end{align*}
where $T_{\R}$ denotes a maximal torus of $\Kr$.
Note that $1/\varphi_{\lambda}$ is naturally extended to 
a holomorphic function defined on the dense open set $p^{-1}(U_e)$, 
which we still denote by the same notation.

Let us define a real-analytic function $f_{\lambda}: U_e \to \R$ by
\[
 f_\lambda(z) := \frac{1}{\varphi_{\lambda} (u_z^\dagger u_z)},
\]
where, for $g \in \Gc$, we write $g^\dagger = \tau(g)^{-1}$
with $\tau$ the involution that characterizes the real form $\Gr$ in \Gc.
Note that $f_\lambda$ is positive on $\Gr/\Kr$.
\begin{prop}
If one denotes by $\Omega_{\lambda}^{\,\R}$ the coadjoint orbit of $\lambda$ under $\Gr$,
then one finds that
\begin{equation}
\label{e:embed_real_orbit}
\mu_\lambda :  \left\{ -\dd' \log f_\lambda(z) ; z \in \Gr/\Kr \right\}  
   \overset{\sim}{\longrightarrow} \Omega_{\lambda}^{\,\R},
\end{equation}
where $\dd'$ denotes the holomorphic part of the exterior derivative $\dd$ .
\end{prop}
The proof is exactly the same as that of \cite[Proposition 3.3]{HOOS93}.

If we denote by $a^\R$ the canonical isomorphism 
from the real-analytic cotangent bundle $T^* (U_e)^\R$ onto the holomorphic cotangent bundle $T^*U_e$ 
given in terms of coordinates by 
\[
 \dd x^\alpha \longleftrightarrow \frac12 \dd z^\alpha
   \quad \textrm{and} \quad
 \dd y^\alpha \longleftrightarrow -\frac{\sqrt{-1}}{2} \dd z^\alpha
\]
with $z^\alpha=x^\alpha + \sqrt{-1} y^\alpha$ for each $\alpha \in \Delta(\u)$,
then the equation \eqref{e:embed_real_orbit} reads
\[
 \mu_\lambda^\R : \left\{ -\dd \, \log f_\lambda(z) ; z \in \Gr/\Kr \right\}  
   \overset{\sim}{\longrightarrow} \Omega_{\lambda}^{\,\R},
\]
where we put $\mu_\lambda^\R:=\mu_\lambda \circ a^\R$.
Thus, this is our version of \cite[Lemma 7.17]{SV98},
which plays a prominent role in establishing the character formula therein.


\bibliographystyle{amsplain}      
\bibliography{rep,geom}   

\end{document}